\newtheorem{theorem}{Theorem}
\newtheorem{lemma}[theorem]{Lemma}
\newtheorem{proposition}[theorem]{Proposition}
\newtheorem{corollary}[theorem]{Corollary}
\theoremstyle{definition}
\newtheorem{definition}[theorem]{Definition}
\newtheorem{example}[theorem]{Example}
\newtheorem{notrems}[theorem]{Notation and Remarks}
\newtheorem{notation}[theorem]{Notation}
\newtheorem{remark}[theorem]{Remark}
\newcommand{\Section}[1]{\section{#1}\setcounter{theorem}{0}}
\newcommand{\<}{\langle}
\renewcommand{\>}{\rangle}
\newcommand{\N}{{\mathbb N}}
\newcommand{\Z}{{\mathbb Z}}
\newcommand{\eps}{{\varepsilon}}
\renewcommand{\phi}{{\varphi}}
\newcommand{\R}{{\mathbb R}}
\newcommand{\C}{{\mathbb C}}
\renewcommand{\H}{{\mathbb H}}
\newcommand{\g}{{\mathfrak g}}
\newcommand{\CC}{{\mathcal{C}}}
\newcommand{\EE}{{\mathcal{E}}}
\newcommand{\scp}{{\<\,\,,\,\>}}
\newcommand{\spann}{{\operatorname{span}}}
\newcommand{\divv}{{\operatorname{div}}}
\newcommand{\grad}{{\operatorname{grad}}}
\newcommand{\res}{{\operatorname{res}}}
\newcommand{\Id}{{\operatorname{Id}}}
\renewcommand{\Re}{{\operatorname{Re}}}
\newcommand{\Imm}{{\operatorname{Im}}}
\newcommand{\symt}{{\operatorname{Sym}^2}}
\newcommand{\symtp}{{\operatorname{Sym}^2_+}}
\newcommand{\dimm}{{\operatorname{dim\,}}}
\newcommand{\diag}{{\operatorname{diag}}}
\newcommand{\inv}{^{-1}}
\newcommand{\Irr}{{\operatorname{Irr}}}
\newcommand{\End}{{\operatorname{\textsl{End}}}}
\newcommand{\GL}{{\operatorname{\textsl{GL}}}}
\newcommand{\SU}{{\operatorname{\textsl{SU}}}}
\newcommand{\U}{{\operatorname{\textsl{U}}}}
\newcommand{\su}{{\operatorname{\mathfrak{su}}}}
\newcommand{\SO}{{\operatorname{\textsl{SO}}}}
\newcommand{\Spin}{{\operatorname{\textsl{Spin}}}}
\newcommand{\restr}[1]{\lower0.4ex\hbox{$|$}\lower0.7ex
  \hbox{$\scriptstyle{#1}$}}
\newcommand{\diffzero}[1]{{\frac d{d{#1}}\lower1ex\hbox{$\big\vert_{\raise1\jot
         \hbox{${\tsize {#1}=0}$}}$}\,}}
\newcommand{\tdiffzero}[1]{{\frac d{d{#1}}\lower0.2ex\hbox{\restr{{#1}=0}}\,}}
\newcommand{\diffzerosec}[1]{{\frac{d^2}{d{#1}^2}\lower1ex\hbox{$\big\vert_{\raise
        1\jot\hbox{${\tsize {#1}=0}$}}$}\,}}
\newcommand{\tdiffzerosec}[1]{{\frac{d^2}{d{#1}^2}\lower0.2ex\hbox{\restr{{#1}=0}}\,}}
\begin{document}

\title[Generic irreducibility of Laplace eigenspaces]{Generic irreducibilty of
  Laplace eigenspaces on certain compact Lie groups}
\author{Dorothee Schueth}
\address{Institut f\"ur Mathematik, Humboldt-Universit\"at zu
Berlin, D-10099 Berlin, Germany}
\email{schueth@math.hu-berlin.de}

\keywords{Laplace operator, eigenvalues, multiplicities, Lie groups, left invariant metrics}
\subjclass[2010]{58J50, 53C30, 22E46}

\thanks{The author was partially supported by by DFG Sonderforschungsbereich~647.}

\begin{abstract}
If $G$ is a compact Lie group endowed with a left invariant metric~$g$,
then $G$ acts via pullback by isometries on each eigenspace of the associated
Laplace operator~$\Delta_g$.
We establish algebraic criteria for the existence of left invariant metrics~$g$ on~$G$
such that each eigenspace of~$\Delta_g$, regarded as the real vector space of
the corresponding real eigenfunctions, is irreducible under the action of~$G$.
We prove that generic left invariant metrics on the Lie groups
$G=\SU(2)\times\ldots\times\SU(2)\times T$, where $T$ is a (possibly trivial) torus,
have the property just described. The same holds for quotients of such groups~$G$
by discrete central subgroups. In particular, it also holds for $\SO(3)$, $\U(2)$, $\SO(4)$.
\end{abstract}

\maketitle

\Section{Introduction}
\label{sec:intro}

\noindent
Let $(M,g)$ be a closed connected Riemannian manifold. The eigenvalue spectrum (with multiplicities)
of the associated Laplace operator $\Delta_g=-\divv_g\grad_g$ acting on smooth functions forms
a discrete series $0=\lambda_0<\lambda_1\le\lambda_2\le\ldots\to\infty$.

A classical result by K.~Uhlenbeck~\cite{U} says that for a generic Riemannian metric~$g$ on~$M$,
all eigenvalues of~$\Delta_g$ are
simple (i.e., have multiplicity
one). At the other extreme, if~$g$ is a homogeneous metric, i.e. the group of isometries
acts transitively on~$M$, then every nonzero eigenvalue is
necessarily multiple; this is a consequence of that each eigenspace is
invariant under pullback by isometries.
An interesting question in this context, raised by V.~Guillemin, is whether on a
a compact Lie group~$G$ there always exists a left invariant metric~$g$ such
that $G$ acts irreducibly on each eigenspace
of~$\Delta_g$\,. In other words, the question is whether for metrics~$g$ which are ``generic''
\emph{within} the set of left invariant Riemannian metrics on~$G$,
the eigenvalues of~$\Delta_g$ have no higher multiplicities than
necessitated by the prescribed symmetries.

For left invariant metrics on~$G$, the associated Laplacian can be expressed via the right regular
representation of~$G$ on $C^\infty(G,\C)$.
Note that the case of biinvariant metrics on simple compact Lie groups represents the
most ``nongeneric'' case here: For such metrics the Laplacian corresponds to a scalar
multiple of the
Casimir operator, and thus has only one eigenvalue on each isotypical component in
$C^\infty(G,\C)$; since the isotypical components are not irreducible (by the
Peter-Weyl theorem), the eigenspaces are certainly not irreducible for a biinvariant metric.

Using the explicit description of the isotypical components of the right regular representation
from the Peter-Weyl theorem,
one quickly arrives at a tentative reformulation for irreducibility of the eigenspaces of~$\Delta_g$
for a given left invariant metric~$g$:
Roughly speaking, for each irreducible representation $\rho_V : G\to\GL(V)$ the eigenvalues
of the operator $\Delta_g^V:=-\sum_{k=1}^n((\rho_V)_*(Y_k))^2$ (where $\{Y_1,\ldots,Y_n\}$ is a 
orthonormal basis of $\g=T_e G$) should be simple, and two nonisomorphic
representations should not share a common eigenvalue (see Remark~\ref{rem:lap}(ii)). 
However, these properties
can never be satisfied if $G$ admits irreducible representations of so-called quaternionic
type (on which all eigenvalues will have even multiplicity) or of complex type (on which
the eigenvalues will be the same as on the -- nonisomorphic -- dual representation);
see Remark~\ref{rem:comquat}.

Fortunately, it turns out that when one considers real-valued eigenfunctions,
then these complications no longer form an obstacle to irreducibility of the eigenspaces.
Rather, the latter then becomes equivalent
to the following three conditions being jointly satisfied: Simple eigenvalues of $\Delta_g^V$
on each irreducible representation~$V$ of real or complex type; eigenvalues of multiplicity
precisely two on each irreducible representation of quaternionic type; no common eigenvalues
of $\Delta_g^V$, $\Delta_g^W$ whenever $V,V^*\not\cong W$ (Corollary~\ref{cor:irr}).

Expressing these conditions in terms of certain resultants or discriminants
of the characteristic polynomials of
the operators $\Delta_g^V$
(or of their derivatives) being nonzero leads to the description of the set of left invariant
metrics with the desired property as the intersection of the complements of the zero sets
of countably many polynomials on $\symtp(\g):=\{Y_1^2+\ldots Y_n^2\mid \{Y_1,\ldots,Y_n\}
\text{ a basis of }\g\}$. Since $\symtp(\g)$ is an open subset of $\symt(\g)\subset
\g\otimes\g$, it simplifies the discussion to regard these polynomials as defined on
all of $\symt(\g)$. Summarizing, existence of a left invariant metric with irreducible real eigenspaces
is equivalent to the condition that none of certain countably many polynomials on $\symt(\g)$
is the zero polynomial; see Proposition~\ref{prop:abc}. In that case, the intersection
of the complements of the zero sets will not only be nonempty, but even residual.

We apply this general description to prove that the Lie group $\SU(2)$ and also
products of the form $\SU(2)\times\ldots\SU(2)\times T$, where $T$ is a torus,
do have the property that generic left invariant metrics on these groups have
irreducible real eigenspaces; see Theorems~\ref{thm:su2} and~\ref{thm:spinmult}.
For $\SU(2)$, the key of the proof consists in showing
that for those of its irreducible representations~$V$ which are of real type, the
eigenvalues of $\Delta_g^V$ are generically simple; the other conditions of Proposition~\ref{prop:abc}
are almost obvious here. For products $\SU(2)\times\SU(2)$, the main difficulty is
showing generic simplicity of eigenvalues on irreducible representations of real type
of the form
$V\otimes W$, where $V$ and~$W$ are irreducible representations of $\SU(2)$ of
quaternionic type; see Remark~\ref{rem:nontriv} and Lemma~\ref{lem:pairs}(ii).

Finally, we observe that if a compact Lie group~$G$ satisfies the conditions
of Proposition~\ref{prop:abc}, then so do its quotients by discrete central subgroups;
see Lemma~\ref{lem:quot}.
Therefore the result extends, for example, to $\SO(3)$, $\U(2)$, and $\SO(4)$.

Note that all of the operators $\Delta_g^V$ are hermitian with respect to a $G$-invariant
hermitian inner product on~$V$. It is well-known that for analytic $1$-parameter families (although not
for analytic multiparameter families) of such operators, the eigenvalues are analytic functions of
the parameter. This fact and methods from perturbation theory as in~\cite{K} might
be useful when examining the problem for other groups. However, the fact that operators of the form
$\Delta_g^V$ lie in a quite small subset of all hermitian operators on~$V$ constitutes a major
difficulty. Our proofs for $\SU(2)$ and $\SU(2)\times\ldots
\times\SU(2)\times T^n$ do actually not use any general perturbation theoretic arguments.

\medskip
This paper is organized as follows:

In Section~\ref{sec:prelims}, we state some basic facts about complex
irreducible representations of compact Lie groups~$G$ and describe how
the Laplace operator~$\Delta_g$ associated with a left invariant metric~$g$
on~$G$ acts on the isotypical components of the right regular
representation of~$G$ on~$C^\infty(G,\C)$.

In Section~\ref{sec:real}, we establish representation theoretic criteria
for the existence of a left
invariant metric~$g$ on~$G$ such that each real eigenspace of~$\Delta_g$
is irreducible (Proposition~\ref{prop:abc}). We observe that in case
of existence, generic left invariant metrics on~$G$ have the same property.
As an illustration, we discuss the case $G=T^n$ (where the said property
of generic left invariant metrics is well-known).

In Section~\ref{sec:groups}, we first prove that the Laplace operators~$\Delta_g$
associated with generic left invariant
metrics on
$\SU(2)$ have irreducible real eigenspaces (Theorem~\ref{thm:su2}). After examining
which of the criteria of Proposition~\ref{prop:abc} are, resp.~are not, easily seen to be
inherited by products of two Lie groups from their factors,
we extend the above result to products
of the form $\SU(2)\times\ldots\times\SU(2)\times T^n$ (Theorem~\ref{thm:spinmult})
and, as a corollary, to quotients of these groups
by discrete central subgroups.

\medskip
The author would like to thank to thank
Carolyn S.~Gordon, David L.~Webb, and Victor Guillemin for inspiring discussions,
and the latter especially for first drawing our attention to the topic.
Moreover, she would like to thank Dartmouth College for its hospitality during
a stay where this research was initiated.

\Section{Preliminaries}
\label{sec:prelims}

\begin{notation}\
\label{not:regular}
\begin{itemize}
\item[(i)]
Throughout the paper, we let $G$ be an $n$-dimensional compact Lie group with Lie algebra~$\g$.
By $\ell_x:G\to G$ (resp.~$r_x:G\to G$) we denote
left (resp.~right) multiplication by $x\in G$.
By $L$ (resp.~$R$) we denote the left regular (resp.~right regular) unitary representation of~$G$ on $L^2(G,\C)$,
given by
\begin{equation*}
R(x)f := f\circ r_x: y\mapsto f(yx)\text{\ \ and \ }L(x)f :=f\circ \ell_{x\inv}:y\mapsto f(x\inv y)
\end{equation*}
for $f\in L^2(G,\C)$. Of course, the two regular representations are isomorphic to each other
via $f\mapsto f\circ\operatorname{inv}$.
\item[(ii)]
If $\rho$ is a representation of~$G$ on some real or complex vector space $V$, then $V$ together with
the action of~$G$ by~$\rho$ is called a $G$-module.
We choose sets $\Irr(G,\C)$ and $\Irr(G,\R)$ of representatives of
isomorphism classes of irreducible real, resp.~complex, $G$-modules.
Since $G$ is compact, all irreducible $G$-modules are finite dimensional.
\item[(iii)]
For an irreducible complex $G$-module~$V$, denote by $I(V)\subset L^2(G,\C)$ the $V$-isotypical
component with respect to the right regular
representation~$R$ on~$L^2(G,\C)$.
\item[(iv)]
A complex irreducible $G$-module~$V$ is called \emph{of real type} (resp.~\emph{of quaternionic type}) if there exists a
conjugate linear $G$-map $J:V\to V$ such that $J^2=\Id$ (resp.~$J^2=-\Id$); $V$~is called \emph{of
complex type} if it is of neither real nor quaternionic type.
\end{itemize}
\end{notation}

\begin{lemma}[see, e.g.,~\cite{BtD}, section II.6]\
\label{lem:irrc}
$\Irr(G,\C)$ is the disjoint union of
$\Irr(G,\C)_\R$, $\Irr(G,\C)_\C$, and $\Irr(G,\C)_\H$, where these denote the subsets consisting
of those elements which are of real, resp.~complex, resp.~quaternionic type.
For $V\in\Irr(G,\C)$ these properties can be characterized as follows:
\begin{itemize}
\item[(i)]
\;$V\in\Irr(G,\C)_\R$ $\Longleftrightarrow$ $V\cong V^*$ and $V\cong U\otimes\C$ for some $U\in\Irr(G,\R)$.
\item[(ii)]
\;$V\in\Irr(G,\C)_\C$ $\Longleftrightarrow$ $V\not\cong V^*$ and $V\oplus V^*\cong U\otimes \C$ for some $U\in\Irr(G,\R)$.
\item[(iii)]
\;$V\in\Irr(G,\C)_\H$ $\Longleftrightarrow$ $V\cong V^*$ and $V\oplus V\cong U\otimes\C$ for some $U\in\Irr(G,\R)$.
\end{itemize}
\end{lemma}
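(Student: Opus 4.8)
The plan is to derive the statement from Schur's lemma together with the existence, by averaging over $G$, of a $G$-invariant hermitian inner product on each $V\in\Irr(G,\C)$. Such an inner product gives a conjugate linear $G$-isomorphism $V\to V^*$; hence $V\cong V^*$ as complex $G$-modules if and only if there exists a conjugate linear $G$-automorphism $J$ of~$V$. When such a $J$ exists, Schur's lemma shows it is unique up to a complex scalar, and that $J^2$ is a complex linear $G$-automorphism, so $J^2=c\,\Id$; comparing $J\circ J^2$ with $J^2\circ J$ forces $c\in\R$, and replacing $J$ by $|c|^{-1/2}J$ we may assume $c\in\{1,-1\}$, the sign being independent of the choice of~$J$. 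This yields the asserted disjoint decomposition: $\Irr(G,\C)_\R$ consists of those $V$ with $V\cong V^*$ admitting $J$ with $J^2=\Id$, $\Irr(G,\C)_\H$ of those with $J^2=-\Id$, and $\Irr(G,\C)_\C$ of those with $V\not\cong V^*$. I would also record at the outset that the three right hand conditions in (i)--(iii) are pairwise incompatible for a fixed~$V$: (i) and (iii) each exclude (ii) because of $V\cong V^*$ versus $V\not\cong V^*$, while (i) and (iii) cannot hold simultaneously, since $U_1\otimes\C\cong V$ and $U_3\otimes\C\cong V\oplus V\cong(U_1\oplus U_1)\otimes\C$ would give $U_3\cong U_1\oplus U_1$, contradicting irreducibility of $U_3$ (here one uses that $A\otimes\C\cong B\otimes\C$ implies $A\cong B$ for real $G$-modules, e.g.\ by comparing characters). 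Therefore it suffices to prove, for each of the three types, the implication ``$V$ of that type $\Rightarrow$ the stated condition''; the converses then follow automatically from the trichotomy and the incompatibility just noted.

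For $V\in\Irr(G,\C)_\R$, I would set $U:=\{v\in V\mid Jv=v\}$. Since $J$ is conjugate linear with $J^2=\Id$, this $U$ is a real subspace with $V=U\oplus iU$, and it is a $G$-submodule because $J$ is a $G$-map. A proper nonzero real $G$-submodule $W\subsetneq U$ would complexify to a proper nonzero complex $G$-submodule $W\otimes\C\subseteq V$, which is impossible; hence $U\in\Irr(G,\R)$ and $U\otimes\C\cong V$. Together with $V\cong V^*$ this gives~(i).

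For $V\in\Irr(G,\C)_\C\cup\Irr(G,\C)_\H$ I would pass to the underlying real $G$-module~$V_\R$ and claim it is irreducible. Indeed, if $W\subseteq V_\R$ is a nonzero real $G$-submodule, then $W\cap iW$ and $W+iW$ are complex $G$-submodules of $V$, hence each equals $0$ or~$V$; if $W\ne V_\R$ this forces $V_\R=W\oplus iW$ with $W$ a real form, and then $v_1+iv_2\mapsto v_1-iv_2$ (for $v_1,v_2\in W$) is a conjugate linear $G$-automorphism of $V$ squaring to~$\Id$, so $V$ would be of real type -- contrary to assumption. Thus $U:=V_\R\in\Irr(G,\R)$, and the standard $G$-equivariant decomposition $V_\R\otimes_\R\C\cong V\oplus\overline V$ of the complexification (the two summands being the eigenspaces of the operator induced by multiplication by~$i$ on~$V$, which squares to $-\Id$) gives $U\otimes\C\cong V\oplus\overline V$. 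Since the $G$-invariant hermitian inner product identifies $\overline V$ with $V^*$, this reads $U\otimes\C\cong V\oplus V^*$; in the quaternionic case $V\cong V^*$ turns it into $U\otimes\C\cong V\oplus V$, giving~(iii), while in the complex case $V\not\cong V^*$ gives~(ii).

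Since all of this is essentially careful bookkeeping with Schur's lemma, there is no single deep step; the parts requiring the most attention are the normalization $J^2=\pm\Id$ with a well-defined sign, and the verification that $V_\R$ is real irreducible for the non-real types, both of which hinge on the interplay between the endomorphism rings $\operatorname{End}_G$ over $\R$ and over~$\C$. An alternative, equivalent route would be via the Frobenius--Schur indicator $\int_G\chi_V(g^2)\,dg\in\{1,0,-1\}$; I prefer the direct approach through the conjugate linear map $J$, since it is $J$ itself -- rather than just the type -- that is used in the later sections of the paper.
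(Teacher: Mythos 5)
The paper does not prove this lemma; it is stated as a known result with a citation to Br\"ocker--tom Dieck, Section~II.6. Your proof is correct and is essentially the standard argument found in that reference: classify $V$ by the existence and sign of a conjugate-linear $G$-automorphism~$J$ (normalized via Schur's lemma so that $J^2=\pm\Id$), extract the real form $U=\ker(J-\Id)$ in the real case, and use the decomposition $V_\R\otimes_\R\C\cong V\oplus\overline V\cong V\oplus V^*$ together with irreducibility of $V_\R$ in the complex and quaternionic cases. The observation that the three right-hand conditions are pairwise incompatible, so that the forward implications alone suffice, is a clean way to close the equivalences.
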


\begin{remark}[see, e.g.,~\cite{BtD}, sections III.1--III.3]\
\label{rem:pw}
For $V\in\Irr(G,\C)$ let $\rho_V$ denote the representation of~$G$ on~$V$.
By the Peter-Weyl Theorem, 
each of the isotypical components $I(V)$ of the right regular representation~$R$ of~$G$ on $L^2(G,\C)$
is contained in $C^\infty(G,\C)$,
and $\bigoplus_{V\in\Irr(G,\C)} I(V)$ is dense in $L^2(G,\C)$.
Moreover, each $I(V)$ is invariant under both $R$ and $L$,
and there is a vector space isomorphism
\begin{equation}
\label{eq:phiv}
\phi_V:V^*\otimes V\to I(V)\text{\ \ given by }\lambda\otimes v\mapsto\lambda(\rho_V(\,.\,)v).
\end{equation}\pagebreak

Thus, on $V^*\otimes V$ one has
\begin{equation}
\label{eq:vv}
\Id\otimes\rho_V(x)=\phi_V\inv\circ R(x)\circ \phi_V\text{ and }
(\rho_V(x)\inv)^*\otimes\Id=\phi_V\inv\circ L(x)\circ \phi_V
\end{equation}
for all $x\in G$.
In particular, $I(V)$ is not only the $V$-isotypical component with respect to~$R$, but also the
$V^*$-isotypical component with respect to $L$.
\end{remark}

\begin{remark}
\label{rem:lap}
Let $g$ be a left invariant Riemannian metric on~$G$.

(i)
The Laplace operator~$\Delta_g$ associated with~$g$
acts on $C^\infty(G,\C)$ by
\begin{equation*}
\Delta_g f=-\textstyle\sum_{k=1}^n \tdiffzerosec{t} f(\,.\,e^{tY_k})=-\textstyle\sum_{k=1}^n (R_*(Y_k))^2 f,
\end{equation*}
where $\{Y_1,\ldots, Y_n\}$ is a $g$-orthonormal basis of~$\g$. This well-known formula follows
from unimodularity of~$G$ and the fact that for each $y\in G$, the initial velocity vectors of the curves
$t\mapsto ye^{tY_k}$ constitute a $g$-orthonormal basis at~$y$.

(ii)
For each $V\in\Irr(G,\C)$, the isotypical component $I(V)$ is invariant under $\Delta_g$ by~(i) and
Remark~\ref{rem:pw}. 
More precisely, by~\eqref{eq:vv}:
\begin{equation}
\label{eq:lapv}
\phi_V\inv\circ\Delta_g\restr{I(V)}\circ\phi_V=\Id\otimes\Delta_g^V\text{\ \ on }V^*\otimes V,
\end{equation}
where
$$\Delta_g^V:=-\sum_{k=1}^n ((\rho_V)_*(Y_k))^2:V\to V
$$
and $\{Y_1,\ldots,Y_n\}$ is a $g$-orthonormal basis.
In particular, each eigenvalue of the restriction of~$\Delta_g$ to the complex vector space~$I(V)$
has multiplicity at least~$\dimm V^*=\dimm V$,
and irreducibility of the eigenspaces of $\Delta_g\restr{I(V)}$ w.r.t.~the left regular representation~$L$
is equivalent to these multiplicities being precisely~$\dimm V$ and the eigenvalues of $\Delta_g^V$ being simple.

(iii)
In the context of~(ii), note that $(\rho_{V^*})_*(X)\lambda=-\lambda\circ(\rho_V)_*X$ for $X\in\g$, $\lambda\in V^*$,
hence
$(\Delta_g^{V^*})(\lambda)=\lambda\circ\Delta_g^V$. Thus, the dual basis of an eigenbasis
of~$\Delta_g^V$ is always an eigenbasis of $\Delta_g^{V^*}$ with the same eigenvalues.
\end{remark}

\begin{remark}
\label{rem:comquat}
(i)
If $V\in\Irr(G,\C)$ is of complex type, i.e., $V\not\cong V^*$, then the two isotypical
components $I(V)$ and $I(V^*)$ do not coincide. However, by Remark~\ref{rem:lap}(ii) and~(iii), the eigenvalues
of $\Delta_g$ on $I(V)$ are the same as those on $I(V^*)$, for any left invariant metric~$g$ on~$G$.
In particular, the corresponding eigenspaces are not irreducible w.r.t.~$L$.

(ii)
If $V\in\Irr(G,\C)$ is of real or quaternionic type, i.e., $V\cong V^*$, then $I(V)=I(V^*)$.
If $V$ is of quaternionic type and $J:V\to V$ is as
in~\ref{not:regular}(iv) with $J^2=-\Id$, then each eigenspace of~$\Delta_g^V$ invariant under~$J$.
Since the eigenvalues of~$\Delta_g^V$ are real, this invariance together with $J^2=-\Id$
implies that each eigenspace is of even dimension.
In particular, it follows by~\ref{rem:lap}(ii) that the eigenspaces of
$\Delta_g\restr{I(V)}$ itself are never irreducible w.r.t.~$L$ if $V$ is of quaternionic type.
\end{remark}

The situation just described changes if one shifts attention to irreducibility of \emph{real}
eigenspaces, as we will see in the following section.

\Section{Irreducibility conditions for real eigenspaces}
\label{sec:real}

\begin{lemma}
\label{lem:real}
For $V\in\Irr(G,\C)$ define $\CC_V\subset C^\infty(G,\C)$ by
$$
\CC_V:=\begin{cases} I(V),& V\text{ of real or quaternionic type},\\
                     I(V)\oplus I(V^*),& V\text{ of complex type}
\end{cases}
$$	
and
$$\EE_V:=\CC_V\cap C^\infty(G,\R).
$$
Obviously, $\EE_V$ is invariant under $L$ and~$R$; moreover:
\begin{itemize}
\item[(i)]
The complexification of $\EE_V$ is $\CC_V$.
\item[(ii)]
For any left invariant metric~$g$ on~$G$,
the following conditions are equivalent:
\begin{itemize}
\item[$\bullet$] Each eigenspace of $\Delta_g\restr{\EE_V}$ is an irreducible real $G$-module with respect to~$L$,
\item[$\bullet$] each eigenvalue of $\Delta_g^V$ has multiplicity
$$\begin{cases}1,& V\text{ of real or complex type},\\
           2,& V\text{ of quaternionic type}.
           \end{cases}
$$
\end{itemize}
\end{itemize}
\end{lemma}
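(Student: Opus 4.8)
The plan is to reduce the statement about irreducibility of real eigenspaces to the known description of the complex isotypical components from the Peter--Weyl theorem (Remark~\ref{rem:pw}) together with the classification of irreducible real modules by type (Lemma~\ref{lem:irrc}). First I would dispose of part~(i): in the real and quaternionic cases $V\cong V^*$, so $I(V)$ is stable under complex conjugation of functions on $G$, hence $I(V)=\EE_V\otimes\C$ and $\EE_V$ is the fixed-point set of conjugation; in the complex case $\overline{I(V)}=I(V^*)$, so $\CC_V=I(V)\oplus I(V^*)$ is conjugation-stable and again $\EE_V=\CC_V\cap C^\infty(G,\R)$ has complexification $\CC_V$. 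In each case $\CC_V$, as a $G$-module under $L$, is (by Remark~\ref{rem:pw}) a direct sum of copies of $V^*$, respectively of $V^*\oplus V$; by Lemma~\ref{lem:irrc} this equals $(\dimm V)\cdot(U\otimes\C)$ in the real/quaternionic case and $(\dimm V)\cdot(U\otimes\C)$ in the complex case as well, where $U\in\Irr(G,\R)$ is the associated real irreducible module. Thus $\EE_V$ is $L$-isomorphic to a direct sum of $\dimm V$ copies of $U$ (real type), $\tfrac12\dimm V$ copies of $U$ (quaternionic type, since there $\dimm U=2\dimm V$), or $\dimm V$ copies of $U$ (complex type, where $\dimm U=2\dimm V$ but each $U\otimes\C\cong V\oplus V^*$).

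Next I would use the decomposition \eqref{eq:lapv}, which says that under the isomorphism $\phi_V$ the operator $\Delta_g$ on $I(V)$ becomes $\Id\otimes\Delta_g^V$ on $V^*\otimes V$. Hence the eigenspace of $\Delta_g\restr{I(V)}$ for an eigenvalue $\mu$ is $\phi_V(V^*\otimes E_\mu)$, where $E_\mu\subset V$ is the $\mu$-eigenspace of $\Delta_g^V$, and as an $L$-module (acting on the $V^*$ factor by $(\rho_V(x)\inv)^*$) it is $(\dimm E_\mu)$ copies of $V^*$. The question is when the corresponding \emph{real} eigenspace inside $\EE_V$ — which is the fixed-point set of the relevant conjugation on this complex eigenspace — is irreducible under $L$. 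In the real case, conjugation identifies the real eigenspace with $(\dimm E_\mu)$ copies of $U$, so irreducibility forces $\dimm E_\mu=1$, i.e. simple eigenvalues; conversely simple eigenvalues give exactly one copy of $U$, hence irreducibility. In the complex case the $\mu$-eigenspace of $\Delta_g$ on $\CC_V$ is $\phi_V(V^*\otimes E_\mu)\oplus\phi_{V^*}(V\otimes E_\mu^*)$ (using Remark~\ref{rem:lap}(iii) that $\Delta_g^{V^*}$ has the dual eigenbasis with the same eigenvalues), conjugation swaps the two summands, and the real fixed-point set is $\cong (\dimm E_\mu)(V\oplus V^*)_{\R}=(\dimm E_\mu)\,U$; again irreducible iff $\dimm E_\mu=1$. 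In the quaternionic case, by Remark~\ref{rem:comquat}(ii) the quaternionic structure $J$ preserves each $E_\mu$ and forces $\dimm E_\mu$ even; the real eigenspace in $\EE_V=I(V)\cap C^\infty(G,\R)$ then decomposes as $\tfrac12\dimm E_\mu$ copies of $U$ (each copy of $U$ complexifying to a $J$-invariant line-pair, i.e. to a quaternionic line $V\oplus V$-worth cut down appropriately), so irreducibility holds iff $\dimm E_\mu=2$.

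The main obstacle I expect is bookkeeping the conjugations carefully in the quaternionic case: one must check that the real subspace $\EE_V$ is exactly the fixed set of the conjugation $f\mapsto\bar f$ and match this, via $\phi_V$, with a conjugate-linear map on $V^*\otimes V$ built from $J$ on the $V$ factor and the induced structure on $V^*$; then verify that a $J$-invariant complex subspace of dimension $2d$ of $V$ contributes, after taking real points in the corresponding piece of $\EE_V$, an $L$-module isomorphic to $d$ copies of the real irreducible $U$ rather than $2d$ copies of something smaller. This is where Lemma~\ref{lem:irrc}(iii), the equality $\dimm U=2\dimm V$, and the fact that $\End_G(U)\cong\H$ are used. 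Once the module-counting is pinned down in all three cases, the equivalence in~(ii) follows immediately by comparing, eigenvalue by eigenvalue, the number of irreducible $L$-constituents of each real eigenspace with the multiplicity of that eigenvalue for $\Delta_g^V$.
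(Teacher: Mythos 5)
Your proposal is substantially correct in outline, and part~(i) is essentially the paper's own argument (both rely on complex conjugation carrying $I(V)$ to $I(V^*)$). For part~(ii), however, you take a genuinely different and more delicate route than the paper, and you leave a real gap in the quaternionic case. You propose to transport the conjugation $f\mapsto\bar f$ through $\phi_V$ to an explicit conjugate-linear structure on $V^*\otimes E_\mu$ built from $J$ on $V$ and the induced structure on $V^*$, and then read off the real fixed set as a direct sum of copies of $U$. You yourself flag this matching as ``the main obstacle,'' and indeed you only assert, without carrying it out, that a $J$-invariant eigenspace of complex dimension $2d$ contributes $d$ copies of $U$; as written, that step is not proved. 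There is also a minor counting slip earlier: in the quaternionic case $V\oplus V\cong U\otimes\C$, so $\CC_V\cong(U\otimes\C)^{\oplus\dimm V/2}$, not $(U\otimes\C)^{\oplus\dimm V}$; your final tally of $\tfrac12\dimm V$ copies of $U$ in $\EE_V$ is nonetheless correct.

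The paper's proof sidesteps the explicit conjugation bookkeeping entirely by a soft module-counting argument. Using~\eqref{eq:lapv} and Lemma~\ref{lem:irrc} it identifies $\CC_V^\mu$ as a complex $L$-module with $(U\otimes\C)^{\oplus m}$ in the real and complex cases and $(U\otimes\C)^{\oplus m/2}$ in the quaternionic case. It then invokes only the trivial identity $\CC_V^\mu=\EE_V^\mu\otimes\C$, so that \emph{as a real $G$-module} $\CC_V^\mu\cong\EE_V^\mu\oplus\EE_V^\mu$, and compares this against $U^{\oplus 2m}$ (resp.~$U^{\oplus m}$). Irreducibility of $U$ (uniqueness of semisimple decomposition) then forces $\EE_V^\mu\cong U^{\oplus m}$ (resp.~$U^{\oplus m/2}$), which is the desired equivalence. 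That approach requires no description of what the conjugation actually does on $V^*\otimes E_\mu$, which is exactly the point where your argument remains incomplete. If you want to finish your version, you would need to verify that the real structure $\phi_V^{-1}\circ(\,\bar{\cdot}\,)\circ\phi_V$ on $V^*\otimes V$ restricts to $V^*\otimes E_\mu$ and exhibits it as $U^{\oplus m/2}\otimes\C$; alternatively, just replace that step with the paper's counting argument.
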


\begin{proof}
(i)
Complex conjugation $L^2(G,\C)\ni f\mapsto
\bar f\in L^2(G,\C)$ maps $I(V)$ to $I(\bar V)$, and
$I(\bar V)=I(V^*)$ since $\bar V$ and~$V^*$ are isomorphic. 
In particular, $I(V)+I(V^*)$
is invariant under the projections to the real and 
imaginary parts of functions. The statement now follows
by recalling that $I(V)=I(V^*)$ if $V$ is of real or 
quaternionic type.

(ii)
Let~$\mu$ be an eigenvalue of $\Delta_g^V$ with
multiplicity~$m$, and let $V^\mu\subset V$, $(V^*)^\mu
\subset V^*$, $\CC_V^\mu\subset \CC_V$, and
$\EE_V^\mu\subset\EE_V$ denote the corresponding
eigenspaces of $\Delta_g^V$, $\Delta_g^{V^*}$,
$\Delta_g\restr{\CC_V}$, $\Delta_g\restr{\EE_V}$,
respectively. Recall from Remark~\ref{rem:lap}(iii) that
$\dimm (V^*)^\mu=\dimm V^\mu=m$.
By \eqref{eq:vv} and~\eqref{eq:lapv}, $\CC_V^\mu$ is
invariant under~$L$ and, as a complex $G$-module, satisfies
$$
\CC_V^\mu\cong
\begin{cases}
V^*\otimes V^\mu\cong (V^*)^{\oplus m}\cong V^{\oplus m},&V \text{ of real or quaternionic type},\\
(V^*\otimes V^\mu)\oplus(V\otimes (V^*)^\mu)
\cong (V^*\oplus V)^{\oplus m},&V \text{ of complex type}.\end{cases}
$$
Lemma~\ref{lem:irrc} now implies that in each case there exists some $U\in\Irr(G,\R)$ such that
$$
\CC_V^\mu\cong
\begin{cases}
(U\otimes\C)^{\oplus m}\cong U^{\oplus m}\otimes\C,&
  V\text{ of real or complex type},\\
(U\otimes\C)^{\oplus m/2}\cong U^{\oplus m/2}\otimes\C,&
  V\text{ of quaternionic type}.
\end{cases}
$$
(Recall from Remark~\ref{rem:comquat}(ii) that $m$ is 
necessarily even if $V$ is of quaternionic type.)

On the other hand, we clearly have $\CC_V^\mu
=\EE_V^\mu\otimes\C$ by~(i) and since $\EE_V$ is invariant 
under~$\Delta_g$.
Regarded as a \emph{real} $G$-module, $\CC_V^\mu$ is thus 
isomorphic to $\EE_V^\mu\oplus\EE_V^\mu$ on the one hand, 
and to $U^{\oplus m}\oplus U^{\oplus m}$,
resp.~$U^{\oplus m/2}\oplus U^{\oplus m/2}$, on the other 
hand. Since $U$ is irreducible, we conclude
$$
\EE_V^\mu\cong
\begin{cases}
U^{\oplus m},&V\text{ of real or complex type},\\
U^{\oplus m/2},&V\text {of quaternionic type}.
\end{cases}
$$
In particular, $\EE_V^\mu$ is irreducible if and only if
$m=1$ for $V$ of real or complex type, resp.~$m=2$ for $V$ 
of quaternionic type.
\end{proof}


\begin{definition}
Given a left invariant metric~$g$ on~$G$, we say that
$\Delta_g$ \emph{has irreducible real eigenspaces} if each eigenspace of
$\Delta_g\restr{C^\infty(G,\R)}$ is irreducible with 
respect to the 
action~$L$ of~$G$.
\end{definition}

{}From Lemma~\ref{lem:real} and Remark~\ref{rem:lap} we 
immediately obtain:

\begin{corollary}
\label{cor:irr}
Let $g$ be a left invariant metric on~$G$. Then
$\Delta_g$ has irreducible real eigenspaces if and only if each 
of the following conditions is satisfied:
\begin{itemize}
\item[(i)]
For any pair $V,W\in\Irr(G,\C)$ with $V\not\cong W$ and
$V^*\not\cong W$, $\Delta_g^V$ and $\Delta_g^W$ have no 
common eigenvalues.
\item[(ii)]
For each $V\in\Irr(G,\C)$ of real or complex type, all 
eigenvalues of $\Delta_g^V$ are simple.
\item[(iii)]
For each $V\in\Irr(G,\C)$ of quaternionic type,
all eigenvalues of $\Delta_g^V$ are of multiplicity two.
\end{itemize}
\end{corollary}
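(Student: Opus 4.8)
The plan is to derive the corollary from the decomposition of $C^\infty(G,\R)$ into the $\Delta_g$-invariant subspaces $\EE_V$ of Lemma~\ref{lem:real}, using Lemma~\ref{lem:real}(ii) to govern irreducibility of the individual pieces. First I would fix a set $S$ of representatives of $\Irr(G,\C)$ modulo the relation $V\sim V^*$ and argue, via the Peter-Weyl theorem (Remark~\ref{rem:pw}) together with $I(V)=I(V^*)$ for $V$ of real or quaternionic type (Remark~\ref{rem:comquat}(ii)), that $\bigoplus_{V\in S}\CC_V$ is dense in $L^2(G,\C)$; taking real parts and invoking Lemma~\ref{lem:real}(i), $\bigoplus_{V\in S}\EE_V$ is then dense in $L^2(G,\R)$, with each $\EE_V\subset C^\infty(G,\R)$ finite-dimensional and $\Delta_g$-invariant. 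Consequently any $\mu$-eigenspace $E$ of $\Delta_g$ on $C^\infty(G,\R)$ splits as $E=\bigoplus_{V\in S}\EE_V^\mu$, where $\EE_V^\mu$ denotes the $\mu$-eigenspace of $\Delta_g\restr{\EE_V}$ and $\EE_V^\mu\neq0$ precisely when $\mu$ is an eigenvalue of $\Delta_g^V$ (by~\eqref{eq:lapv}, Remark~\ref{rem:lap}(iii), and the identity $\CC_V^\mu=\EE_V^\mu\otimes\C$ from the proof of Lemma~\ref{lem:real}).

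Next I would observe that the hypothesis of~(i) --- namely $V\not\cong W$ and $V^*\not\cong W$ --- is exactly the condition under which $\CC_V\cap\CC_W=0$, hence $\EE_V\cap\EE_W=0$, for $V\neq W$ in $S$; this is a short case check over the types of $V$ and $W$, using that distinct isotypical components $I(\cdot)$ intersect trivially. Granting this, both implications are formal. If (i)--(iii) hold: condition (i) forces at most one summand $\EE_V^\mu$ in $E=\bigoplus_{V\in S}\EE_V^\mu$ to be nonzero, since two nonzero summands would exhibit $\mu$ as a common eigenvalue of $\Delta_g^V$ and $\Delta_g^W$ for distinct $V,W\in S$ --- contradicting (i) once one untangles the equivalences --- so $E=\EE_V^\mu$ for a single $V$; then (ii) and (iii) say the relevant multiplicity equals $1$ for $V$ of real or complex type and $2$ for $V$ of quaternionic type, whence $\EE_V^\mu$ is irreducible by Lemma~\ref{lem:real}(ii), and $\Delta_g$ has irreducible real eigenspaces. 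Conversely, if all real eigenspaces are irreducible, then for any $V\in S$ and any eigenvalue $\mu$ of $\Delta_g^V$ the nonzero $L$-invariant subspace $\EE_V^\mu$ of the irreducible $G$-module $E$ must equal $E$, so Lemma~\ref{lem:real}(ii) gives multiplicity $1$ (real or complex type) or $2$ (quaternionic type), proving (ii) and (iii); and a common eigenvalue of $\Delta_g^V$, $\Delta_g^W$ as in~(i) would force $\EE_V^\mu=E=\EE_W^\mu$ while $\EE_V^\mu\cap\EE_W^\mu\subseteq\EE_V\cap\EE_W=0$, hence $E=0$, which is impossible --- giving~(i).

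Since Lemma~\ref{lem:real} already carries out the representation-theoretic work, this proof is essentially a bookkeeping exercise; the only point requiring genuine attention is verifying that the condition on $V,W$ in~(i) coincides with the transversality $\CC_V\cap\CC_W=0$, i.e.\ running through the real/complex/quaternionic type combinations. I do not anticipate a real obstacle there.
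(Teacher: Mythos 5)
Your proof is correct and takes essentially the same approach as the paper, which gives no explicit argument and simply asserts that the corollary follows ``immediately'' from Lemma~\ref{lem:real} and Remark~\ref{rem:lap}. You supply exactly the bookkeeping the paper leaves implicit: the Peter--Weyl decomposition of a real eigenspace as $E=\bigoplus_{V\in S}\EE_V^\mu$, the observation that the hypothesis of condition~(i) is equivalent to $\CC_V\cap\CC_W=0$, and the application of Lemma~\ref{lem:real}(ii) to each summand.
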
\pagebreak

\begin{notrems}\
\label{notrems:abc}
\begin{itemize}
\item[(i)]
Let $\symt(\g):=\spann_\R\{Y\cdot Z:=\frac12(Y\otimes Z+Z\otimes Y)\mid Y,Z\in\g\}$
be the second symmetric tensor power of~$\g$.
We also write $Y^2$ for $Y\cdot Y$.
\item[(ii)]
By $\symtp(\g)$ we denote the subset $\{Y_1^2+\ldots+Y_n^2\mid\{Y_1,\ldots,Y_n\}\text{ is a basis of }\g\}$.
Note that $\symtp(\g)$ is open in~$\symt(\g)$. (In fact, if we identify $\symt(\g)$ with the
space of real symmetric $n\times n$-matrices by fixing a basis of~$\g$ and the corresponding
canonical basis of~$\symt(g)$, then
$\symtp(\g)$ corresponds to the subset of positive definite matrices.)
\item[(iii)]
For $V\in\Irr(G,\C)$, we define the linear map $D_V:\symt(\g)\to\End(V)$ by
$$D_V(Y\cdot Z):=-\tfrac12\bigl((\rho_V)_*(Y)\circ(\rho_V)_*(Z)
  +(\rho_V)_*(Z)\circ(\rho_V)_*(Y)\bigr)
$$
for $Y,Z\in\g$, and by linear extension.
Note that any endomorphism in the image of~$D_V$ is
diagonizable and has only real eigenvalues because
it is a hermitian map with respect to any $G$-invariant
hermitian inner product on~$V$.
\item[(iv)] 
If $\{Y_1,\ldots,Y_n\}$ is a basis of~$\g$ then
$$D_V(Y_1^2+\ldots+Y_n^2)=-\textstyle\sum_{k=1}^n((\rho_V)_*(Y_k))^2=\Delta_g^V,
$$
where $g$ is the metric 
with orthonormal basis $\{Y_1,\ldots,Y_n\}$.
\end{itemize}
\end{notrems}

\begin{definition}
\label{def:abc}
\begin{itemize}
\item[(i)]
Let 
$$p_V:=\det(D_V(\,.\,)-X\cdot\Id):\symt(\g)\to\C[X]
$$
be the map sending $s\in\symt(\g)$ to the characteristic 
polynomial of $D_V(s)$.
\item[(ii)]
By
$$\res:\C[X]\times\C[X]\to\C
$$
we denote the resultant; see, e.g.,~\cite{GKZ}.
For two polynomials $p,q\in\C[X]$ the number $\res(p,q)$ is 
given by a 
certain polynomial in the coefficients of $p$ and~$q$, and
$$
\res(p,q)\ne 0\Longleftrightarrow p\text{ and }q\text{ have
no common zeros}.
$$
For a polynomial $p$ and its formal derivative $p'$, $\res(p,p')$
is the discriminant of $p$ (up to some nonzero scalar factor)
and vanishes if and only if $p$ has a zero of multiplicity at least two.
\item[(iii)]
For $V,W\in\Irr(G,\C)$ we define the following
$\C$-valued polynomials on~$\symt(\g)$:
\begin{equation*}
\begin{split}
a_{V,W}&:=\res\circ(p_V,p_W):\symt(\g)\to\C,\\
b_V&:=\res\circ(p_V,p'_V):\symt(\g)\to\C,\\
c_V&:=\res\circ(p_V,p''_V):\symt(\g)\to\C,
\end{split}
\end{equation*}
where $p'_V, p''_V\in\C[X]$ denote the formal derivatives
of $p_V\in\C[X]$ with respect to the variable~$X$.
\end{itemize}
\end{definition}

\begin{remark}
\label{rem:abc}
(i)
Since $\symtp(\g)$ is open in~$\symt(\g)$, the polynomial $a_{V,W}$ on~$\symt(\g)$ is not identically zero
if and only if it is not identically zero on $\symtp(\g)$.
This is the case if and only if there exists $s\in\symtp(\g)$ such that $D_V(s)$
and~$D_W(s)$ have no common zeros. By the definition of~$\symtp(\g)$ and by~\ref{notrems:abc}(iv),
this is equivalent to the existence of a left invariant metric~$g$ on~$G$
such that $\Delta_g^V$ and $\Delta_g^W$ have no common 
eigenvalues.

(ii)
Similarly, $b_V\ne0$ is equivalent to the existence of a left invariant
metric~$g$ on~$G$ such that all eigenvalues of $\Delta_g^V$ have multipicity one. 

(iii)
If $V$ is of quaternionic type then all eigenvalues
of~$\Delta_g$ have at least multiplicity two by
Remark~\ref{rem:comquat}(ii).
In this case, $c_V\ne0$ is equivalent to the existence of a left invariant
metric~$g$ on~$G$ such that all eigenvalues of~$\Delta_g^V$ are of
multiplicity exactly two.
\end{remark}

\begin{proposition}
\label{prop:abc}
Existence of a left invariant metric~$g$ on~$G$ such that $\Delta_g$ has
irreducible real eigenspaces is equivalent to the following conditions 
being jointly satisfied:
\begin{itemize}
\item[(a)]
$a_{V,W}\ne0$ for any pair $V,W\in\Irr(G,\C)$ with
$V\not\cong W$ and $V^*\not\cong W$,
\item[(b)]
$b_V\ne0$ for each $V\in\Irr(G,\C)$ of real or complex type,
\item[(c)]
$c_V\ne0$ for each $V\in\Irr(G,\C)$ of quaternionic type.
\end{itemize}
In this case, the orthonormal bases for left invariant
metrics~$g$ with the property that $\Delta_g$ has irreducible real eigenspaces constitute
a residual set in~$\g^{\oplus n}=\g^{\oplus\dim\g}$.
\end{proposition}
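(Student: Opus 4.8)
The plan is to reformulate Corollary~\ref{cor:irr} as a pointwise nonvanishing condition for the polynomials of Definition~\ref{def:abc}, transport it from $\symtp(\g)$ to the space $\g^{\oplus n}$ of $n$-tuples of vectors via the polynomial map
\[
\Phi:\g^{\oplus n}\to\symt(\g),\qquad (Y_1,\ldots,Y_n)\mapsto Y_1^2+\ldots+Y_n^2,
\]
and then invoke the Baire category theorem. First I would fix a basis of~$\g$, identifying $\symt(\g)$ with the space of real symmetric $n\times n$-matrices and, by Notation and Remarks~\ref{notrems:abc}(ii), $\symtp(\g)$ with the (open, nonempty) set of positive definite ones. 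The map~$\Phi$ is a polynomial map (quadratic in the entries); from the definition of~$\symtp(\g)$ and~\ref{notrems:abc}(ii) it carries the set $\mathcal B\subset\g^{\oplus n}$ of \emph{bases} onto $\symtp(\g)$, and in fact $\Phi^{-1}(\symtp(\g))=\mathcal B$, since $\Phi$ sends a tuple into $\symtp(\g)$ exactly when that tuple is linearly independent. Note that $\mathcal B$ is the complement of the zero set of the determinant polynomial on $\g^{\oplus n}$, hence open and dense. Finally, by~\ref{notrems:abc}(iv), for a basis $(Y_1,\ldots,Y_n)$ with associated left invariant metric~$g$ one has $D_V(\Phi(Y_1,\ldots,Y_n))=\Delta_g^V$ for every $V\in\Irr(G,\C)$.

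Next I would record the pointwise translation of Corollary~\ref{cor:irr}. Fix $s\in\symtp(\g)$; by~\ref{notrems:abc}(iii) the endomorphism $D_V(s)$ is hermitian, so the roots of $p_V(s)\in\C[X]$, with multiplicity, are exactly the eigenvalues of $D_V(s)$. Hence, by the defining property of the resultant (Definition~\ref{def:abc}(ii)): $a_{V,W}(s)\ne0$ iff $D_V(s)$ and $D_W(s)$ share no eigenvalue; $b_V(s)\ne0$ iff all eigenvalues of $D_V(s)$ are simple; and, when $V$ is of quaternionic type (so that every eigenvalue of $D_V(s)$ already has multiplicity $\ge2$ by Remark~\ref{rem:comquat}(ii), and $\dim V\ge2$, so $p_V''(s)$ is not the zero polynomial), $c_V(s)\ne0$ iff $p_V(s)$ has no root of multiplicity $\ge3$, i.e.\ iff every eigenvalue of $D_V(s)$ has multiplicity exactly~$2$ --- here one uses that a root of multiplicity $m\ge2$ of $p_V(s)$ is a root of $p_V''(s)$ precisely when $m\ge3$. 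Combining this with $D_V(\Phi(\cdot))=\Delta_g^V$ and Corollary~\ref{cor:irr}, and using that $\Irr(G,\C)$ is countable, I obtain: the left invariant metric determined by a basis $(Y_1,\ldots,Y_n)$ has irreducible real eigenspaces if and only if $\Phi(Y_1,\ldots,Y_n)$ lies in
\[
\mathcal S:=\{\,s\in\symtp(\g)\mid a_{V,W}(s)\ne0,\ b_U(s)\ne0,\ c_{U'}(s)\ne0\ \text{ for all admissible }V,W,U,U'\,\},
\]
where $(V,W)$ ranges over pairs with $V\not\cong W$ and $V^*\not\cong W$, $U$ over $\Irr(G,\C)_\R\cup\Irr(G,\C)_\C$, and $U'$ over $\Irr(G,\C)_\H$. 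Since these $n$-tuples are precisely the $g$-orthonormal bases of the metrics in question, the set named in the statement equals $\Phi^{-1}(\mathcal S)$ (which is automatically contained in $\mathcal B$).

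The forward implication of the equivalence is then immediate: if such a metric~$g$ exists, pick a $g$-orthonormal basis $(Y_1,\ldots,Y_n)$; then $s:=\Phi(Y_1,\ldots,Y_n)\in\mathcal S$, so none of the polynomials $a_{V,W},b_U,c_{U'}$ vanishes at~$s$, hence none is the zero polynomial, which is exactly conditions~(a)--(c). For the converse together with the residuality claim I would argue by Baire category. Assume (a)--(c). By these hypotheses, each of the countably many polynomials $P\in\{a_{V,W}\}\cup\{b_U\}\cup\{c_{U'}\}$ is a nonzero $\C$-valued polynomial on the finite-dimensional real vector space $\symt(\g)$; writing $P=P_1+iP_2$ with $P_1,P_2$ real-valued, some $P_j$ is a nonzero real polynomial, so its zero set $Z(P_j)$ is closed and nowhere dense, and therefore so is $Z(P)\subseteq Z(P_j)$. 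Moreover $P\circ\Phi$ is a $\C$-valued polynomial on $\g^{\oplus n}$, and it is \emph{not} identically zero, because $P$ does not vanish identically on the nonempty open set $\symtp(\g)=\Phi(\mathcal B)$; hence $Z(P\circ\Phi)\subset\g^{\oplus n}$ is closed and nowhere dense. Consequently
\[
\Phi^{-1}(\mathcal S)=\mathcal B\cap\bigcap\nolimits_{P}\bigl(\g^{\oplus n}\setminus Z(P\circ\Phi)\bigr)
\]
is a countable intersection of open dense subsets of $\g^{\oplus n}$, hence residual; in particular, the underlying space being a complete metric space, it is dense and nonempty, which proves the converse implication. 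This argument is essentially an application of the Baire category theorem once the pointwise reformulation is in place; the one point I expect to require genuine care, rather than pure bookkeeping, is the nonvanishing of the pulled-back polynomials $P\circ\Phi$ --- it is what turns hypotheses (a)--(c), which concern $P$ as a polynomial on $\symt(\g)$, into open dense conditions on honest bases of~$\g$, and it relies on $\symtp(\g)$ being open together with $\Phi(\mathcal B)=\symtp(\g)$.
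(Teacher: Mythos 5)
Your proposal is correct and follows essentially the same route as the paper's own proof: both pull the resultant polynomials $a_{V,W}, b_V, c_V$ back through the polynomial map $\Phi$ (the paper calls it $F$) from $\g^{\oplus n}$ to $\symt(\g)$, observe that the pullbacks remain nonzero because $\Phi(\mathcal B)=\symtp(\g)$ is open, and conclude by intersecting the countably many dense open complements of their zero sets (Baire). The one cosmetic difference is that you note $\Phi^{-1}(\symtp(\g))=\mathcal B$ and so the good set is automatically contained in the bases, whereas the paper first forms the complement of the countable union of zero sets and then intersects with the open dense set of linearly independent tuples; this is the same computation organized slightly differently.
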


\begin{proof}
That the conditions are necessary is obvious from 
Corollary~\ref{cor:irr} and Remark~\ref{rem:abc}. 
Conversely, assume that (a), (b), (c) are satisfied.
Write
$$F:\g^{\oplus n}\ni(Y_1,\ldots,Y_n)\mapsto Y_1^2+\ldots +Y_n^2\in\symtp(\g)\subset\symt(\g).
$$
Since the image of~ $F$ contains $\symtp(\g)$ which is open in $\symt(\g)$ (see~\ref{notrems:abc}(ii)), the polynomials
$\tilde a_{V,W}:=a_{V,W}\circ F$, $\tilde b_V:=b_V\circ F$, and $\tilde c_V:=c_V\circ F$ are again nontrivial
under the respective assumptions on $V$ and $W$.
Consider the subset
$$
\mathcal{N}:=\bigcup_{V,W\in\Irr(G,\C);\,V,V^*\not\cong W}\tilde a_{V,W}\inv(0)
\cup\bigcup_{V\in\Irr(G,\C)_\R\cup\Irr(G,\C)_\C}\tilde b_V\inv(0)
\cup\bigcup_{V\in\Irr(G,\C)_\H}\tilde c_V\inv(0)\subset\g^{\oplus n}.
$$
Then $\mathcal{N}$ is the union of the zero sets of countably many
nonzero polynomials. Thus, 
$\g^{\oplus n}\setminus\mathcal{N}$ is a residual set (i.e., an intersection
of countably many sets with dense interiors). Now let
$$
\mathcal{B}:=\{(Y_1,\ldots,Y_n)\in\g^{\oplus n}\setminus\mathcal{N}\mid \{Y_1,\ldots,Y_n\}
\text{ is linearly independent}\}.
$$
Then $\mathcal{B}$ is still residual in~$\g^{\oplus n}$, and for any $b\in\mathcal{B}$
the Laplace operator~$\Delta_g$ associated with
the left invariant metric~$g$ on~$G$ with orthonormal
basis~$b$ has irreducible real eigenspaces by Corollary~\ref{cor:irr}
and Remark~\ref{rem:abc}.
\end{proof}

\begin{example}
\label{ex:torus}
Let $G:=T^n=\R^n/\Z^n$. It is well-known that for generic
left invariant metrics~$g$ on~$T^n$, the Laplace operator~$\Delta_g$
has irreducible real eigenspaces.
In fact, let $\scp$ be a euclidean inner product on~$\R^n$
and $g$ be the corresponding left invariant metric induced
on~$T^n$. Let $\Lambda:=(\Z^n)^*\subset(\R^n)^*$.
For $\lambda\in\Lambda$, we denote the induced function
on~$T^n$ again by~$\lambda$.
The character
$\chi_\lambda:T^n\ni x\mapsto\exp(2\pi i\lambda(x))\in\C$ 
is a complex eigenfunction
of~$\Delta_g$ with
eigenvalue $\mu_\lambda:=4\pi\|\lambda\|^2$, where
$\|\,.\,\|$ denotes
the norm induced on $(\R^n)^*$ by~$\scp$.
For generic~$\scp$, one has $\mu_\lambda=\mu_{\lambda'}$
if and only if $\lambda'=\pm\lambda$. In this case, the
corresponding real eigenspace $\EE^{\mu_\lambda}$
is two-dimensional if $\lambda\ne0$ (otherwise,
one-dimensional) and is spanned by
$\Re(\chi_\lambda)=\cos2\pi\lambda(\,.\,)$ and
$\Imm(\chi_\lambda)=\sin2\pi\lambda(\,.\,)$. Obviously, 
$\EE^{\mu_\lambda}$ is then
irreducible under the action of~$T^n$.

Let us verify this property of~$G=T^n$ in the framework of
Proposition~\ref{prop:abc}:
We have $\Irr(G,\C)=\{V_\lambda\mid\lambda\in\Lambda\}$,
where each $V_\lambda$ is one-dimensional and
$\rho_{V_\lambda}=\chi_\lambda(\,.\,)\Id$. The trivial
representation~$V_0$ is of real type; for $\lambda\ne0$,
$V_\lambda$ is of complex type since $(V_\lambda)^*
=V_{-\lambda}\not\cong V_\lambda$. Writing
$\mu_\lambda(Y\cdot Z):=4\pi^2\lambda(Y)\lambda(Z)$ for
$Y\cdot Z\in\symt(\g)=\symt(\R^n)$, we have $D_{V_\lambda}(Y\cdot Z)
=-(\chi_{\lambda})_*(Y)(\chi_{\lambda})_*(Z)=\mu_\lambda(Y\cdot Z)$.
After extending $\mu_\lambda$ linearly,
we get $D_{V_\lambda}(s) = \mu_\lambda(s)\Id$ and
$$
p_{V_\lambda}(s)=\mu_\lambda(s)-X\in\C[X]
$$
for all $s\in\symt(\R^n)$.
If $V_\lambda,V_\lambda^*\not\cong V_{\lambda'}$
then $\lambda'\ne\pm\lambda$. In this case, choose any
$Y\in\g=\R^n$ with $\lambda'(Y)\ne\pm\lambda(Y)$. Then
$\mu_\lambda(Y^2)\ne\mu_{\lambda'}(Y^2)$, hence
$a_{V_\lambda,V_{\lambda'}}(Y^2)\ne0$.
Moreover, $(p_{V_\lambda}(s))'$ is the constant
polynomial~$-1$, so $b_{V_\lambda}(s)\ne0$ for
every $s\in\symt(\R^n)$. (Of course, this corresponds here to the
trivial fact that the operator $D_{V_\lambda}(s)$ on
the one-dimensional space $V_\lambda$ can only have
a simple eigenvalue.) So conditions (a) and~(b)
of Proposition~\ref{prop:abc} are satisfied;
condition~(c) is void since
none of the $V_\lambda$ is of quaternionic type.
\end{example}

\Section{$\SU(2)$, products, and quotients}
\label{sec:groups}

\begin{theorem}
\label{thm:su2}
The compact Lie group $\SU(2)$ satisfies the conditions of
Proposition~\ref{prop:abc}. In particular, for generic
left invariant metrics~$g$ on~$\SU(2)$, $\Delta_g$ has
irreducible real eigenspaces.
\end{theorem}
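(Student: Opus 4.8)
The plan is to verify conditions (a), (b), (c) of Proposition~\ref{prop:abc} for $G=\SU(2)$. Here $\Irr(\SU(2),\C)=\{V_m:m\ge0\}$ with $\dimm V_m=m+1$, every $V_m$ is self-dual, and by Lemma~\ref{lem:irrc} $V_m$ is of real type for $m$ even (it factors through $\SO(3)$) and of quaternionic type for $m$ odd, with no $V_m$ of complex type; so (a) is needed for all $m\ne m'$, (b) for $m$ even, and (c) for $m$ odd. I would dispose of (a) and (c) at once by evaluating the relevant polynomials on $\symt(\g)$ at convenient points. For (a): at the tensor $\sum_i Y_i^2$ belonging to a biinvariant metric, $D_{V_m}(\sum_i Y_i^2)$ is the Casimir operator, a scalar proportional to $m(m+2)$; these scalars are pairwise distinct, so $a_{V_m,V_{m'}}\ne0$ there. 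For (c) with $m$ odd: at a rank-one tensor $Y^2$, $0\ne Y\in\g$, the operator $D_{V_m}(Y^2)=-((\rho_{V_m})_*Y)^2$ has eigenvalues $\tfrac14k^2$ with $k$ ranging over the weights $m,m-2,\dots,-m$, all nonzero and odd, so every eigenvalue has multiplicity exactly two and none has multiplicity $\ge3$, whence $c_{V_m}(Y^2)\ne0$. (A polynomial on $\symt(\g)$ that is nonzero somewhere is not the zero polynomial, hence --- see Remark~\ref{rem:abc} --- not identically zero on $\symtp(\g)$ either.)

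The substance of the theorem is condition (b): for each even $m=2\ell$ I must produce a left invariant metric with $\Delta_g^{V_{2\ell}}$ having $2\ell+1$ distinct eigenvalues, equivalently show $b_{V_{2\ell}}\ne0$. Since the problem is invariant under $\operatorname{Aut}(\su(2))=\SO(3)$ I may restrict to metrics diagonal in a fixed basis $X_1,X_2,X_3$ with $[X_i,X_j]=\varepsilon_{ijk}X_k$, so that, with $J_i:=i(\rho_{V_{2\ell}})_*X_i$ the spin-$\ell$ angular momentum operators, $\Delta_g^{V_{2\ell}}=aJ_1^2+bJ_2^2+cJ_3^2$ for suitable $a,b,c>0$. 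The three $\pi$-rotations $\theta_1,\theta_2,\theta_3$ about the axes form a Klein four-group $\Gamma$ whose action on $V_{2\ell}$ commutes with every such operator; decomposing $V_{2\ell}$ into its four $\Gamma$-isotypic subspaces gives a factorization $p_{V_{2\ell}}|_{\mathrm{diag}}=\prod_\chi q_\chi$ of the characteristic polynomial into the four blocks. In a weight basis $\{v_k\}$ for $J_3$ the operator $aJ_1^2+bJ_2^2+cJ_3^2$ is diagonal plus a term proportional to $(a-b)(J_+^2+J_-^2)$, which sends $v_k$ to a combination of $v_{k\pm2}$; hence in the ``Wang'' basis consisting of $v_0$ and the vectors $v_k\pm v_{-k}$ each block is a Jacobi (tridiagonal) matrix whose off-diagonal entries are nonzero multiples of $a-b$ (the relevant matrix elements of $J_+^2$ between consecutive weight vectors never vanish). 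Consequently, for $a\ne b$ each block has simple spectrum, so each $\res(q_\chi,q_\chi')$-type discriminant $\res(q_\chi,q_\chi')$... more precisely each $\res(q_\chi,q_\chi')$... --- rather, each discriminant $\operatorname{disc}(q_\chi)$ --- is a nonzero polynomial; and since, up to nonzero constants, $b_{V_{2\ell}}|_{\mathrm{diag}}$ is the product of the polynomials $\operatorname{disc}(q_\chi)$ and $\res(q_\chi,q_{\chi'})$ over all blocks and all pairs, it remains to show each $\res(q_\chi,q_{\chi'})$ ($\chi\ne\chi'$) is a nonzero polynomial, i.e. that the four blocks can be made to have pairwise disjoint spectra.

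This last point is the heart --- and the step I expect to be the main obstacle --- because no single degenerate metric separates all four blocks. At the ``prolate'' symmetric top $a=b$ the operator is $a\kappa+(c-a)J_3^2$ ($\kappa$ the Casimir scalar), diagonal in the $J_3$-basis, and the two pairs of distinct blocks having equal $\theta_3$-character but opposite $\theta_1$-character then carry overlapping eigenvalues $a\kappa+(c-a)k^2$, whereas every other pair already has disjoint spectra (the underlying sets of $|k|$ consisting respectively of even and of odd integers, whose squares are disjoint). The remedy is to use the ``oblate'' top $b=c$ as well: there the operator is $b\kappa+(a-b)J_1^2$, diagonal in the $J_1$-basis, the roles of $\theta_1$ and $\theta_3$ are interchanged, and the two pairs that failed to separate at $a=b$ now have disjoint spectra. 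Thus every pair of distinct blocks is disjoint at one of the two symmetric tops, and since having disjoint spectra at a point of $\symt(\g)$ forces the corresponding resultant to be nonzero there, every $\res(q_\chi,q_{\chi'})$ is a nonzero polynomial. Therefore $b_{V_{2\ell}}|_{\mathrm{diag}}\ne0$, hence $b_{V_{2\ell}}\ne0$, and Proposition~\ref{prop:abc} applies, proving the theorem; the residual statement then follows from Proposition~\ref{prop:abc}. The parts I expect to require care rather than new ideas are the precise tridiagonal form of the blocks and the bookkeeping of which $\Gamma$-character occupies which block at each of the two symmetric tops (this depends mildly on the parity of $\ell$, through the sign by which $\theta_1$ acts on the weight-zero vector).
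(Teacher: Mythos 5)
Your proof is correct, and for condition (b) --- which is indeed the crux --- you take a genuinely different route from the paper. For (a) you use the same Casimir argument, and for (c) the same rank-one $Y^2$ argument (up to a harmless normalization factor that depends on $|Y|$). The interesting divergence is (b).

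The paper's argument works with the single rank-two element $H^2+\eps A^2\in\symt(\g)$ and takes $\eps>0$ small. The decomposition used there is only into the two subspaces $W_0,W_1$ of $v_{m,\ell}$'s with $\ell$ even/odd (equivalently, the eigenspaces of the single involution $\rho_{V_m}(\exp(\tfrac\pi2 H))$). On each $W_i$ the operator is tridiagonal with nonzero subdiagonal as soon as $\eps>0$, giving simple spectrum per block; and for small $\eps$ the two blocks stay disjoint because at $\eps=0$ they carry squares of integers of different parity. Everything is localized near one point of $\symt(\g)$.

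Your argument instead works on the full three-parameter slice $aJ_1^2+bJ_2^2+cJ_3^2$ and refines the decomposition to the four isotypic blocks of the Klein four-group $\Gamma$ generated by the $\pi$-rotations --- precisely the $D_2$/Wang symmetrization of the quantum asymmetric rotor. Simple spectrum inside each of the four blocks comes from the Jacobi form of the Wang matrices for $a\ne b$; for separating the pairs of blocks you use the two symmetric tops $a=b$ and $b=c$, observing that a pair of $\Gamma$-characters differs in the $\theta_3$-component or the $\theta_1$-component, and the prolate (resp. oblate) top separates those pairs. You then invoke the factorization $\operatorname{disc}(p_{V_{2\ell}}|_{\mathrm{diag}})=\prod_\chi\operatorname{disc}(q_\chi)\cdot\prod_{\chi<\chi'}\res(q_\chi,q_{\chi'})^2$ to conclude that the product, hence $b_{V_{2\ell}}$, is a nonzero polynomial --- crucially, each factor being shown nonzero at a \emph{different} point, which is enough for the product. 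This is correct, and it is the more ``global'' and physics-flavoured version of the argument; it buys a cleaner conceptual picture of why the asymmetric top has generically simple spectrum, at the cost of more bookkeeping (four blocks, two special tops, and the sign conventions in the Wang basis that you rightly flag). The paper's version is shorter because the coarse 2-block decomposition already yields tridiagonal matrices once you perturb only in the $A$-direction, so only one pair of blocks ever needs to be separated, and that happens automatically at $\eps=0$.

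One small point to nail down when you write this out: the wraparound at $|k|\le 1$ (where $J_+^2$ and $J_-^2$ mix $v_1$ and $v_{-1}$, and $v_0$ couples to $v_{\pm2}$) lands entirely inside single Wang blocks, and does produce nonzero sub/superdiagonal entries there, so the tridiagonal-with-nonzero-off-diagonal structure survives; this is exactly the kind of ``care rather than new ideas'' you anticipated.
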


\begin{proof}
As is well-known,
the isomorphism classes of irreducible complex
representations of~$G:=\SU(2)$ are parametrized
by $m\in\N_0$, and the corresponding
$(m+1)$-dimensional representation~$V_m$
can be viewed as the space
of homogeneous complex polynomials of degree~$m$ in two
variables: Let
$$
v_{m,\ell}:=z_1^{m-\ell}z_2^\ell\in\C[z_1,z_2],
$$
and $V_m:=\spann\{v_{m,0},\ldots,v_{m,m}\}$.
For $x\in\SU(2)$ define $\rho_{V_m}(x):
V_m\ni v\mapsto v\circ x\in V_m$, where $v\in V_m$
is viewed as a function on $\C^2$
and $\SU(2)$ acts on~$\C^2$ on the right
via $x=\left(\begin{smallmatrix}a&b\\c&d\end{smallmatrix}
\right):(u,v)\mapsto(au+cv,bu+dv)$.
The irreducible representation~$V_m$ of~$\SU(2)$
is of real type if $m$ is odd, and of quaternionic
type if $m$ is even
(see, e.g., \cite{BtD}, section VI.4--VI.5).

Consider the basis $\{H,A,B\}$ of $\g=\su(2)$ with
$$
H:=\left(\begin{smallmatrix}i&0\\0&-i
\end{smallmatrix}\right),
A:=\left(\begin{smallmatrix}0&i\\i&0
\end{smallmatrix}\right),
B:=\left(\begin{smallmatrix}0&-1\\1&0
\end{smallmatrix}\right).
$$
Note that $D_{V_m}(H^2+A^2+B^2)$ is,
up to some scalar multiple,
the Casimir operator on~$V_m$. More precisely, 
one easily sees
$$
D_{V_m}(H^2+A^2+B^2)=m(m+2)\Id_{V_m},
$$
so $m(m+2)$ is the only zero of $p_{V_m}(H^2+A^2+B^2)$.
In particular, $a_{V_m,V_{m'}}(H^2+A^2+B^2)\ne0$ whenever
$m\ne m'$. This shows condition~(a) of
Proposition~\ref{prop:abc}.

Condition~(c) is similarly easy to check:
The matrix corresponding to $(\rho_{V_m})_*(H)$
with respect to the basis $\{v_{m,0},\ldots,v_{m,m}\}$
of~$V_m$ equals
\begin{equation}
\label{eq:eigH}
\diag(im, i(m-2), i(m-4),\ldots,-i(m-2),-im)
\end{equation}
In particular, if $V_m$ is quaternionic
(i.e., $m$ is odd), then each of the eigenvalues
$m^2, (m-2)^2, \ldots, 9, 1$ of
$D_{V_m}(H^2)=-((\rho_{V_m})_*(H))^2$
is of multipicity exactly two, so $c_{V_m}(H^2)\ne0$.

It remains to show condition~(b) of
Proposition~\ref{prop:abc} for $V_m$ with $m$ even.
The case $m=0$ is trivial. Now let $m>0$.
This time it does not suffice to consider
$D_{V_m}(H^2)$ because still all of its nonzero eigenvalues
have multiplicity two, so $b_{V_m}(H^2)=0$.
However, we will show that there exists $\eps>0$
such that $b_{V_m}(H^2+\eps A^2)\ne0$.
Note that
\begin{equation*}
\begin{split}
(\rho_{V_m})_*(A)&: v_{m,\ell}\mapsto i(m-\ell)v_{m,\ell+1}
+i\ell v_{m,\ell-1}\text{ and}\\
-((\rho_{V_m})_*(A))^2&: v_{m,\ell}\mapsto
(m-\ell)(m-\ell-1)v_{m,\ell+2}+\ell(\ell-1)v_{m,\ell-2}
+d_{m,\ell}v_{m,\ell},
\end{split}
\end{equation*}
for some $d_{m,\ell}\in\R$,
where we set $v_{m,k}:=0$ for $k<0$ or $k>m$.
Therefore,
$$
W_0:=\spann\{v_{m,0},v_{m,2},\ldots,v_{m,m}\}\text{ and }
W_1:=\spann\{v_{m,1},v_{m,3},\ldots,v_{m,m-1}\}
$$
are invariant under $-((\rho_{V_m})_*(A))^2$. With
respect to the given basis, the matrix corresponding to
$-((\rho_{V_m})_*(A))^2\restr{W_0}$ is tridiagonal
with subdiagonal entries
$m(m-1), (m-2)(m-3),\ldots, 2\cdot 1$ (and superdiagonal
entries $2\cdot 1,4\cdot 3,\ldots,m(m-1)$). Similarly,
the matrix corresponding to
$-((\rho_{V_m})_*(A))^2\restr{W_1}$ is tridiagonal
with subdiagonal entries
$(m-1)(m-2),(m-3)(m-4),\ldots,3\cdot 2$.
Let $\eps>0$ be arbitrary.
Since the matrix corresponding to
$-((\rho_{V_m})_*(H))^2$ is diagonal, the matrices
corresponding to 
\begin{equation*}
\begin{split}
\Phi_0(\eps):=(D_{V_m}(H^2+\eps A^2))\restr{W_0}\text{ and }
\Phi_1(\eps):=(D_{V_m}(H^2+\eps A^2))\restr{W_1}
\end{split}
\end{equation*}
still are tridiagonal 
with all subdiagonal entries nonzero
(or just an $(1\times 1)$-matrix
in the case of $\Phi_1(\eps)$ for $m=2$).
It is well-known that all eigenvalues
of such maps are of geometric multiplicity one (consider
the rank of $\Phi_k(\eps)-\mu\Id$). In our case, geometric
and algebraic multipicity coincide by~\ref{notrems:abc}(iii).
So for any $\eps>0$,
each of $\Phi_0(\eps)$ and $\Phi_1(\eps)$ has only simple 
eigenvalues.
Also note that $\Phi_0(0)$ and $\Phi_1(0)$ have no common
eigenvalues (recall~\eqref{eq:eigH}).
This implies that there exists $\eps>0$ such that
$D_{V_m}(H^2+\eps A^2)$ has only simple eigenvalues;
in particular, $b_{V_m}(H^2+\eps A^2)\ne0$.
\end{proof}

In the following, let $G$, $\g$ be as in the previous section,
let $G'$ be another compact Lie group, and let $\g'$ be its Lie algebra.

\begin{remark}
\label{rem:prod}
For the direct product $G\times G'$ one has
$$\Irr(G\times G',\C)=\{V\otimes V'\mid V\in\Irr(G,\C), V'\in\Irr(G',\C)\},
$$
where $G\times G'$ acts on $V\otimes V'$ by $\rho_{V\otimes V'}(x,x')=
\rho_V(x)\otimes\rho_{V'}(x')$ (see, e.g., \cite{BtD}, section~II.4). Note that $(V\otimes V')^*\cong V^*\otimes(V')^*$
is isomorphic to $V\otimes V'$ if and only if $V\cong V^*$ and $V'\cong(V')^*$.
In this case, if $J:V\to V$ and $J':V'\to V'$ are as in~\ref{not:regular}(iv),
then $J\otimes J':V\otimes V'\to V\otimes V'$ is a well-defined conjugate linear $G\times G'$-map,
with the sign of its square depending on the signs of $J^2=\pm\Id$ and $(J')^2=\pm\Id$. So one has:
\begin{itemize}
\item[(i)]~\hbox{$V\otimes V'\in\Irr(G\times G',\C)_\R$ $\Longleftrightarrow$} $V$ and $V'$ are both
of real or both of quaternionic type.
\item[(ii)]~\hbox{$V\otimes V'\in\Irr(G\times G',\C)_\C$ $\Longleftrightarrow$} at least one of $V$ or $V'$ is of complex type.
\item[(iii)]~\hbox{$V\otimes V'\in\Irr(G\times G',\C)_\H$ $\Longleftrightarrow$} either $V$ is of real and $V'$ is
of quaternionic type, or vice versa.
\end{itemize}
\end{remark}

\begin{remark}
\label{rem:prodact}
The spaces $\symt(\g)$ and $\symt(\g')$ are canonically embedded in $\symt(\g\oplus\g')$
by the linear extensions of $\iota:Y\cdot Z\mapsto(Y,0)\cdot(Z,0)$ and $\iota':Y'\cdot Z'\mapsto(0,Y')\cdot(0,Z')$,
respectively.
Let $V\in\Irr(G,\C)$ and $V'\in\Irr(G',\C)$. For $(Y,Y')\in\g\oplus\g'$,
$$(\rho_{V\otimes V'})_*(Y,Y')=(\rho_V)_*(Y)\otimes\Id+\Id\otimes(\rho_{V'})_*(Y').
$$
In particular, we have
$$D_{V\otimes V'}(\iota(s))=D_V(s)\otimes\Id\text{ and }D_{V\otimes V'}(\iota'(s'))=\Id\otimes D_{V'}(s')
$$
for $s\in\symt(\g)$, $s'\in\symt(\g')$.
\end{remark}

\begin{lemma}
\label{lem:prod}
Let $V,W\in\Irr(G,\C)$ and $V',W'\in\Irr(G',\C)$.
\begin{itemize}
\item[(i)]
If $a_{V,W}\ne0$ or $a_{V',W'}\ne0$ then $a_{V\otimes V',W\otimes W'}\ne0$.
\item[(ii)]
If $b_V\ne0$ and $b_{V'}\ne0$ then $b_{V\otimes V'}\ne0$.
\item[(iii)]
If $V\in\Irr(G,\C)_\R$ with $b_V\ne0$ and $V'\in\Irr(G',\C)_\H$ with $c_{V'}\ne0$
(or vice versa), then $c_{V\otimes V'}\ne0$.
\end{itemize}
\end{lemma}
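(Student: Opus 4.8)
The plan is to exploit Remark~\ref{rem:prodact}, which says that the operators $D_{V\otimes V'}$ restricted to the embedded copies of $\symt(\g)$ and $\symt(\g')$ are tensor products with identity. For part~(i), suppose $a_{V,W}\neq0$; then by Remark~\ref{rem:abc}(i) there is $s\in\symtp(\g)$ such that $D_V(s)$ and $D_W(s)$ have no common eigenvalue. Pick an arbitrary $s'\in\symtp(\g')$ and set $t:=\iota(s)+\iota'(s')\in\symtp(\g\oplus\g')$. Then
$$D_{V\otimes V'}(t)=D_V(s)\otimes\Id+\Id\otimes D_{V'}(s'),$$
so its eigenvalues are the pairwise sums $\mu+\nu'$ with $\mu\in\operatorname{spec}D_V(s)$, $\nu'\in\operatorname{spec}D_{V'}(s')$; likewise for $D_{W\otimes W'}(t)$ with $\operatorname{spec}D_W(s)$, $\operatorname{spec}D_{W'}(s')$. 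If $\mu+\nu'=\eta+\zeta'$ were a common eigenvalue, then since these are real numbers and the $D_{V'}(s')$- and $D_{W'}(s')$-spectra might overlap arbitrarily, this does \emph{not} immediately give $\mu=\eta$. So a plain choice of $s'$ is not enough; I would instead first rescale. Replace $s$ by $s$ (fixed) and, crucially, replace $s'$ by $\delta s'$ for a generic small $\delta>0$: the eigenvalues of $\delta D_{V'}(s')$ and $\delta D_{W'}(s')$ all lie in an interval of length $O(\delta)$ around~$0$, while the gaps between distinct eigenvalues of $D_V(s)$ and of $D_W(s)$, and the gaps between $\operatorname{spec}D_V(s)$ and $\operatorname{spec}D_W(s)$, are bounded below by a positive constant independent of~$\delta$. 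Hence for $\delta$ small enough $D_{V\otimes V'}(\iota(s)+\iota'(\delta s'))$ and $D_{W\otimes W'}(\iota(s)+\iota'(\delta s'))$ have disjoint spectra, giving $a_{V\otimes V',W\otimes W'}\neq0$; the case $a_{V',W'}\neq0$ is symmetric. Here I use that $\iota(s)+\iota'(\delta s')$ still lies in $\symtp(\g\oplus\g')$ since it corresponds to a positive-definite block-diagonal matrix.

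For part~(ii), suppose $b_V\neq0$ and $b_{V'}\neq0$. By Remark~\ref{rem:abc}(ii) choose $s\in\symtp(\g)$ with $D_V(s)$ having all eigenvalues simple, and $s'\in\symtp(\g')$ with $D_{V'}(s')$ having all eigenvalues simple. Again form $t_\delta:=\iota(s)+\iota'(\delta s')$; the eigenvalues of $D_{V\otimes V'}(t_\delta)$ are $\mu_j+\delta\nu'_k$ where $\mu_1<\cdots<\mu_p$ are the distinct eigenvalues of $D_V(s)$ and $\nu'_1<\cdots<\nu'_q$ those of $D_{V'}(s')$, \emph{each with multiplicity one}. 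For $\delta$ small enough the clusters around distinct $\mu_j$ stay separated (same length-$O(\delta)$ versus fixed-gap argument as above), and within a cluster the values $\mu_j+\delta\nu'_k$, $k=1,\dots,q$, are distinct because the $\nu'_k$ are distinct and $\delta>0$. Hence $D_{V\otimes V'}(t_\delta)$ has all $pq$ eigenvalues simple, so $b_{V\otimes V'}\neq0$.

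For part~(iii), say $V\in\Irr(G,\C)_\R$ with $b_V\neq0$ and $V'\in\Irr(G',\C)_\H$ with $c_{V'}\neq0$; by Remark~\ref{rem:prod}(iii), $V\otimes V'$ is of quaternionic type, so we must show every eigenvalue of a suitable $D_{V\otimes V'}(t)$ has multiplicity exactly two. Choose $s\in\symtp(\g)$ with $D_V(s)$ having simple eigenvalues $\mu_1<\cdots<\mu_p$, and $s'\in\symtp(\g')$ with $D_{V'}(s')$ having all eigenvalues of multiplicity exactly two, say distinct values $\nu'_1<\cdots<\nu'_q$ each with multiplicity two. As before, for small $\delta>0$ the eigenvalues of $D_{V\otimes V'}(\iota(s)+\iota'(\delta s'))$ are $\mu_j+\delta\nu'_k$, which are pairwise distinct over $(j,k)$ for $\delta$ small (cluster-separation plus simplicity of the $\nu'_k$), each occurring with multiplicity $1\cdot 2=2$. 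Hence $c_{V\otimes V'}\neq0$. The "vice versa" case is identical with the roles of the factors exchanged.

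The main obstacle, and the only genuinely nontrivial point, is the one flagged above: tensoring with the identity turns spectra into sumsets, and in a plain substitution the sumsets of the two operators can collide even when the original spectra are well-separated or the original eigenvalues are simple. The rescaling trick — shrinking the second factor's contribution by a generic small $\delta$ so that all its eigenvalue spacings are dominated by the fixed spacings coming from the first factor — is what resolves this, and it is the step I would write out carefully (including the elementary compactness/continuity estimate that the relevant finitely many gaps of $D_V(s),D_W(s)$ are bounded below while those of $\delta D_{V'}(s'),\delta D_{W'}(s')$ are $O(\delta)$). Everything else is bookkeeping with Remark~\ref{rem:prodact} and Remark~\ref{rem:abc}.
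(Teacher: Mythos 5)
Your proof is correct, and parts (ii) and (iii) are exactly the paper's argument: pick $s,s'$ witnessing the hypotheses, form $\iota(s)+\eps\iota'(s')$, and observe that for $\eps>0$ small the eigenvalues $\mu_i+\eps\nu_j$ separate properly. For part (i), however, you made things harder than they need to be. You worried that a ``plain choice'' of $s'$ would give sumset collisions and therefore introduced the $\delta$-rescaling, and you also felt obliged to keep the element inside $\symtp(\g\oplus\g')$. Neither is necessary: the polynomials $a_{V,W}$ of Definition~\ref{def:abc} are defined on all of $\symt$, and by Remark~\ref{rem:abc}(i) nonvanishing on $\symt$ is what you want (this is equivalent to nonvanishing on the open subset $\symtp$ anyway). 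So the paper simply evaluates at $\iota(s)$ with no $s'$ contribution at all --- the $\delta=0$ case of your argument. Then $D_{V\otimes V'}(\iota(s))=D_V(s)\otimes\Id$ and $D_{W\otimes W'}(\iota(s))=D_W(s)\otimes\Id$ by Remark~\ref{rem:prodact}, and these have exactly the same spectra (as sets) as $D_V(s)$ and $D_W(s)$, so disjointness is immediate with no estimates. Your version is valid but spends effort on a continuity/gap estimate that the direct evaluation at $\iota(s)$ makes superfluous.
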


\begin{proof}
(i)
If $a_{V,W}\ne0$ choose $s\in\symt(\g)$ with $a_{V,W}(s)\ne0$; that is,
$D_V(s)$ and $D_W(s)$ have no common eigenvalues. By Remark~\ref{rem:prodact}
it follows that
$D_{V\otimes V'}(\iota(s))$ and $D_{W\otimes W'}(\iota(s))$ have no common eigenvalues either.
Thus, $a_{V\otimes V',W\otimes W'}(\iota(s))\ne0$. The case $a_{V',W'}\ne0$ is analogous.

(ii)
Write $d=\dimm V$, $d'=\dimm V'$.
Choose $s\in\symt(\g)$ and $s'\in\symt(\g')$ such that
$b_V(s)\ne 0$ and $b_V(s')\ne0$. Then $D_V(s)$ and $D_{V'}(s')$ both have
only simple eigenvalues, say, $\mu_1<\mu_2<\ldots<\mu_d$ and $\nu_1<\ldots <\nu_{d'}$\,,
respectively. For any $\eps\in\R$, we have
$$D_{V\otimes V'}(\iota(s)+\eps\iota'(s'))=D_V(s)\otimes\Id+\Id\otimes \eps D_{V'}(s')
$$
by Remark~\ref{rem:prodact}, and this operator has eigenvalues
$\mu_i+\eps\nu_j$, $i=1,\ldots,d$, $j=1,\ldots,d'$.
For sufficiently small $\eps>0$, these eigenvalues are again pairwise different, hence
$b_{V\otimes V'}(\iota(s)+\eps\iota'(s'))\ne0$.

(iii)
Here $V\otimes V'$ is of quaternionic type by Remark~\ref{rem:prod}.
Similarly as above, we choose
$s\in\symt(\g)$
and $s'\in\symt(\g')$ such that all eigenvalues of $D_V(s)$ are simple and all eigenvalues
of $D_{V'}(s')$ are of multiplicity exactly two. Then again, choosing $\eps>0$ small enough,
all eigenvalues of
$D_{V\otimes V'}(\iota(s)+\eps\iota'(s'))=D_V(s)\otimes\Id+\Id\otimes \eps D_{V'}(s')$ will have
multiplicity exactly two; hence $c_{V\otimes V'}(\iota(s)+\eps(\iota'(s'))\ne0$.
\end{proof}

\begin{proposition}
\label{prop:prod}
Assume that both $G$ and $G'$ satisfy the conditions (a), (b),~(c) of Proposition~\ref{prop:abc}.
Then $G\times G'$ satisfies condition~(c) of Proposition~\ref{prop:abc}; moreover:
\begin{itemize}
\item[(i)]
$G\times G'$ satisfies condition~(a) of Proposition~\ref{prop:abc} if and only if
$a_{V\otimes V',V^*\otimes V'}\ne0$ and $a_{V\otimes V',V\otimes{V'}^*}\ne0$
whenever $V\in\Irr(G,\C)$, $V'\in\Irr(G,\C)$ are both of complex type.
\item[(ii)]
$G\times G'$ satisfies condition~(b) of Proposition~\ref{prop:abc} if and only if
$b_{V\otimes V'}\ne0$ whenever $V\in\Irr(G,\C)$ and $V'\in\Irr(G',\C)$ are both
of quaternionic type, or if one is of quaternionic and the other of complex type.
\end{itemize}
\end{proposition}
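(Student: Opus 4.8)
The plan is to feed the type classification of the irreducibles $V\otimes V'$ of $G\times G'$ from Remark~\ref{rem:prod} into the three inheritance statements of Lemma~\ref{lem:prod}. This settles all but finitely many ``residual'' cases for each of conditions (a), (b), (c), and the content of the proposition is precisely the identification of those residual cases. I would dispose of condition~(c) first, as it is the quickest: by Remark~\ref{rem:prod}(iii), $V\otimes V'$ is of quaternionic type exactly when one of $V,V'$ is of real and the other of quaternionic type. In the case ``$V$ real, $V'$ quaternionic'' we have $b_V\ne0$ by condition~(b) for $G$ and $c_{V'}\ne0$ by condition~(c) for $G'$, and in the case ``$V$ quaternionic, $V'$ real'' we have $c_V\ne0$ and $b_{V'}\ne0$; either way Lemma~\ref{lem:prod}(iii) (including its ``or vice versa'' clause) gives $c_{V\otimes V'}\ne0$. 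Hence $G\times G'$ satisfies~(c) with no further hypotheses.

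For part~(i), the key observation is that $(V\otimes V')^*\cong V^*\otimes(V')^*$, so the hypothesis in condition~(a) for a pair of irreducibles $V\otimes V',\,W\otimes W'$ of $G\times G'$ reads: not both $V\cong W$ and $V'\cong W'$, and not both $V^*\cong W$ and $(V')^*\cong W'$. For such a pair, Lemma~\ref{lem:prod}(i) yields $a_{V\otimes V',W\otimes W'}\ne0$ as soon as $a_{V,W}\ne0$ or $a_{V',W'}\ne0$, and by condition~(a) for $G$, resp.\ $G'$, this can fail only if $W\cong V$ or $W\cong V^*$, resp.\ $W'\cong V'$ or $W'\cong(V')^*$. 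Of the four ways of combining these, $W\cong V,\,W'\cong V'$ is excluded because then $W\otimes W'\cong V\otimes V'$, and $W\cong V^*,\,W'\cong(V')^*$ is excluded because then $W\otimes W'\cong(V\otimes V')^*$; the two surviving combinations are $W\cong V,\,W'\cong(V')^*$ and $W\cong V^*,\,W'\cong V'$. In the first, $V\otimes V'\not\cong V\otimes(V')^*$ forces $V'$ to be of complex type and $(V\otimes V')^*\not\cong V\otimes(V')^*$ forces $V$ to be of complex type; symmetrically in the second. So condition~(a) for $G\times G'$ holds if and only if $a_{V\otimes V',\,V\otimes(V')^*}\ne0$ and $a_{V\otimes V',\,V^*\otimes V'}\ne0$ for all $V\in\Irr(G,\C)_\C$ and $V'\in\Irr(G',\C)_\C$; necessity is clear, since such pairs do satisfy the hypothesis of~(a) for $G\times G'$.

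For part~(ii), Remark~\ref{rem:prod} shows that $V\otimes V'$ fails to be of real or complex type exactly in the two combinations ``$V$ real, $V'$ quaternionic'' and ``$V$ quaternionic, $V'$ real'', so in the hypothesis of condition~(b) for $G\times G'$ all other combinations of types occur. If $V$ and $V'$ are both of real or complex type, then $b_V\ne0$ and $b_{V'}\ne0$ by condition~(b) for $G$ and $G'$, and Lemma~\ref{lem:prod}(ii) gives $b_{V\otimes V'}\ne0$. The remaining admissible combinations are precisely: both quaternionic, or one quaternionic and one complex. Hence condition~(b) for $G\times G'$ holds if and only if $b_{V\otimes V'}\ne0$ in exactly these cases, necessity again being immediate. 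I expect no analytic difficulty anywhere in this proposition; the only place demanding genuine care is the bookkeeping in part~(i) --- checking that the two excluded combinations really do produce $V\otimes V'$ or its dual and that the two surviving ones really do force both factors to be of complex type. The hard work lies instead in verifying the surviving conditions for concrete $G$, which is where the analysis of $\SU(2)$ and of products of copies of $\SU(2)$ enters.
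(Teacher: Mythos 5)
Your proof is correct and takes essentially the same route as the paper: dispose of (c) via Remark~\ref{rem:prod}(iii) and Lemma~\ref{lem:prod}(iii), then for (a) and (b) use Lemma~\ref{lem:prod}(i),(ii) to reduce to the residual cases where the factorwise conditions cannot help, and identify those cases via the type classification of Remark~\ref{rem:prod}. The bookkeeping in part~(i) -- identifying the two surviving combinations $W\cong V,\,W'\cong(V')^*$ and $W\cong V^*,\,W'\cong V'$ and showing they force both $V$ and $V'$ to be of complex type -- matches the paper's case analysis exactly.
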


\begin{proof}
In the following, let $V,W\in\Irr(G,\C)$, $V',W'\in\Irr(G',\C)$.

Condition~(c) for $V\otimes V'$ of quaternionic type
follows from Remark~\ref{rem:prod} and Lemma~\ref{lem:prod}(iii).

The condition of (i) is necessary for condition~(a) of Proposition~\ref{prop:abc} because
for $V,V'$ of complex type, $V\otimes V'\not\cong V^*\otimes V', V\otimes(V')^*$ and
$(V\otimes V')^*\cong V^*\otimes(V')^*\not\cong V^*\otimes V', V\otimes(V')^*$.
For the converse direction, assume that the condition of~(i) is satisfied; we have to show
that this already implies condition~(a) of Proposition~\ref{prop:abc} for $G\times G'$.
Note that if $V\otimes V'\not\cong W\otimes W'$ and $(V\otimes V')^*\not\cong W\otimes W'$
then one of the following three conditions holds:
\begin{itemize}
\item[1.]
$V,V^*\not\cong W$ or $V',(V')^*\not\cong W'$,
\item[2.] 
$V\cong W$ and $(V')^*\cong W'$, but $V^*\not\cong W$ and $V'\not\cong W'$,
\item[3.]
$V^*\cong W$ and $V'\cong W'$, but $V\not\cong W$ and $(V')^*\not\cong W'$.
\end{itemize}
Since $G$ and $G'$ satisfy the conditions of Proposition~\ref{prop:abc} by
assumption, case~1.~implies that $a_{V,W}\ne0$ or $a_{V',W'}\ne0$.
By Lemma~\ref{lem:prod}(i) we then have $a_{V\otimes V',W\otimes W'}\ne0$.
In case~2., $V$ and $V'$ are both nonisomorphic to their duals and hence are of
complex type. Moreover, $W\otimes W'\cong V\times(V')^*$,
so $a_{V\otimes V',W\otimes W'}=a_{V\otimes V',V\otimes (V')^*}\ne0$ by assumption.
Case~3.~is analogous.
Thus, condition~(a) of Proposition~\ref{prop:abc} is satisfied for $G\times G'$.

The condition of~(ii) is necessary for condition~(b) of Proposition~\ref{prop:abc} by
Remark~\ref{rem:prod}(i), (ii).
For the converse direction, assume that the condition of~(ii) is satisfied; that is,
$b_{V\otimes V'}\ne0$ whenever both of $V$ and $V'$ are of quaternionic type, or
if one is of quaternionic and one is of complex type.
By Lemma~\ref{lem:prod}(ii) we know $b_{V\otimes V'}\ne0$ if none of $V$ or $V'$ is of quaternionic type.
By Remark~\ref{rem:prod}, these were all possible cases for $V\otimes V'$ of real or complex
type. So condition~(b) of Proposition~\ref{prop:abc} holds for $G\times G'$.
\end{proof}

\begin{remark}
\label{rem:nontriv}
Note that the conditions in (i),~(ii) of Proposition~\ref{prop:prod}
are far from trivial, in spite of the assumption that $G$ and $G'$ individually satisfy
the conditions of Proposition~\ref{prop:abc} separately. For example,
if $V$ and~$V'$ are both of quaternionic type, then for generic $s,s'$, all eigenvalues
of $D_V(s)$ and $D_{V'}(s')$ will be of multiplicity exactly two, which results in
$D_{V\otimes V'}(\iota(s)+\iota'(s'))$ having all of its eigenvalues of multiplicity exactly four.
But $V\otimes V'$ is of real type, so condition~(b) of Proposition~\ref{prop:abc} requires
generic $D_{V\otimes V'}(\tilde s)$ to have all eigenvalues simple. Thus, in order to establish
this condition, it will not suffice to work with elements of the form $\iota(s)+\iota'(s')
\in\symt(\g\oplus\g')$. We will succeed in solving this problem in the case $G=G'=\SU(2)$;
see Lemma~\ref{lem:pairs}(ii) below.
\end{remark}

We now state our main result:

\begin{theorem}\
\label{thm:spinmult}
Any product $\SU(2)\times\ldots\times\SU(2)$ or $\SU(2)\times\ldots\SU(2)\times T^n$, where $T^n$ is
a torus,
satisfies the conditions of Proposition~\ref{prop:abc}.
Consequently, for a generic left invariant metric~$g$ on $\SU(2)\times\ldots\times\SU(2)$
or on $\SU(2)\times\ldots\times\SU(2)\times T^n$, the associated Laplace
operator~$\Delta_g$ has irreducible real eigenspaces.
In particular, this is the case for $\SU(2)\times\SU(2)=\Spin(3)\times\Spin(3)=\Spin(4)$.
\end{theorem}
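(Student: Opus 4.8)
\emph{Plan.} I would induct on the number $k$ of $\SU(2)$-factors to show that $\SU(2)^k$ satisfies conditions (a), (b),~(c) of Proposition~\ref{prop:abc}, and then append the torus by one more application of Proposition~\ref{prop:prod} with $G=\SU(2)^k$, $G'=T^n$; the base cases are Theorem~\ref{thm:su2} and Example~\ref{ex:torus}. Two structural facts organize everything: by Remark~\ref{rem:prod}, $\SU(2)^k$ has \emph{no} irreducible representation of complex type (because $\SU(2)$ has none), while tori have none of quaternionic type; and a representation $V_{m_1}\otimes\dots\otimes V_{m_k}$ of $\SU(2)^k$ is of real type exactly when an even number of the tensor factors $V_{m_i}$ is of quaternionic type.

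Conditions~(a) and~(c) are inherited almost for free. Proposition~\ref{prop:prod} grants condition~(c) to a product of two groups that individually satisfy (a), (b),~(c), so (c) propagates along the induction and then to $\SU(2)^k\times T^n$. For~(a), Proposition~\ref{prop:prod}(i) reduces condition~(a) for a product to a statement about pairs consisting of one complex-type representation from each factor; no such pair can exist once one factor is $\SU(2)^k$, so~(a) holds at every stage (when $G'=T^n$ the torus does contribute complex-type representations, but the required pair still fails since its other member comes from $\SU(2)^k$).

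Condition~(b) is the heart of the matter. By Proposition~\ref{prop:prod}(ii), the inductive step $\SU(2)^k=\SU(2)^{k-1}\times\SU(2)$ reduces to proving $b_{V\otimes V_m}\ne0$ whenever $V$ is a quaternionic-type representation of $\SU(2)^{k-1}$ and $V_m$ one of $\SU(2)$ (all cases involving a complex-type factor being vacuous). Since $V$ has an odd number of quaternionic tensor factors, I would write $V\cong U\otimes V_{m_j}$ with $V_{m_j}$ quaternionic and $U$ a \emph{real-type} representation of $\SU(2)^{k-2}$; then $b_U\ne0$ by the inductive hypothesis, $b_{V_{m_j}\otimes V_m}\ne0$ by Lemma~\ref{lem:pairs}(ii) below (the decisive input), and hence $b_{V\otimes V_m}=b_{U\otimes(V_{m_j}\otimes V_m)}\ne0$ by Lemma~\ref{lem:prod}(ii). (Equivalently: for a real-type $W=V_{m_1}\otimes\dots\otimes V_{m_k}$, group its evenly many quaternionic factors into pairs and the remaining ones into singletons; each pair has nonzero~$b$ by Lemma~\ref{lem:pairs}(ii), each singleton by Theorem~\ref{thm:su2}, and iterating Lemma~\ref{lem:prod}(ii) yields $b_W\ne0$.) Appending $T^n$, Proposition~\ref{prop:prod}(ii) leaves only the case $b_{V\otimes U_\lambda}\ne0$ with $V$ quaternionic on $\SU(2)^k$ and $U_\lambda$ a nontrivial character of $T^n$ (if $\lambda=0$ or $V$ is real-type, Lemma~\ref{lem:prod}(ii) applies at once, $b_{U_\lambda}\ne0$ being trivial). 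Here I would use that $\R^n$ is central and $U_\lambda$ one-dimensional: choose $Z\in\R^n$ with $\lambda(Z)\ne0$, let $H_i\in\su(2)$ be the element $H$ of the proof of Theorem~\ref{thm:su2} placed in the $i$-th factor, and consider $\sum_{i=1}^k(H_i+t_iZ)^2$, completed to a sum of squares by small positive multiples of the squares of the remaining basis vectors (the $A_i$, $B_i$, $Z$, and the other torus directions) so as to lie in $\symtp$. On $V\otimes U_\lambda$ the operator~$D$ of the main part is diagonal in the tensor basis, with entries $\sum_{i=1}^k(m_i-2\ell_i+ct_i)^2$ ($0\le\ell_i\le m_i$, $c\ne0$ a fixed constant); two such entries coincide only on a proper affine hyperplane in $(t_1,\dots,t_k)$-space, so for generic~$t$ they are pairwise distinct, and adding the central scalar from the torus directions together with the remaining (small) hermitian perturbation keeps the spectrum simple. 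Thus $b_{V\otimes U_\lambda}\ne0$, so all of Proposition~\ref{prop:abc} holds for $\SU(2)^k$ and $\SU(2)^k\times T^n$; the case $\SU(2)\times\SU(2)\cong\Spin(3)\times\Spin(3)\cong\Spin(4)$ is $k=2$, $n=0$.

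The one genuine obstacle is Lemma~\ref{lem:pairs}(ii): $b_{V_m\otimes V_{m'}}\ne0$ for $V_m,V_{m'}$ both of quaternionic type. As Remark~\ref{rem:nontriv} explains, the product elements $\iota(s)+\iota'(s')\in\symt(\su(2)\oplus\su(2))$ force $D_{V_m\otimes V_{m'}}$ to have every eigenvalue of multiplicity four (a double spectrum tensored with a double spectrum), whereas $V_m\otimes V_{m'}$ is of real type and~(b) demands generic simplicity. The proof of that lemma must therefore use genuinely mixed elements of $\symt(\su(2)\oplus\su(2))$ with nonvanishing cross-terms between the two copies of $\su(2)$, and comes down to an explicit analysis of the resulting matrix showing that its characteristic polynomial has only simple roots for suitable parameter values. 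That is where the real work of the section lies.
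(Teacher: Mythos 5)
Your proposal is correct and follows essentially the same route as the paper's proof: inherit conditions~(a) and~(c) through Proposition~\ref{prop:prod}, and reduce condition~(b) to the decisive input Lemma~\ref{lem:pairs}(ii) by pairing up the quaternionic $\SU(2)$-factors, with the remaining real-type factors and the torus character handled by Lemma~\ref{lem:prod}(ii). The two mild divergences are organizational rather than substantive. First, you phrase the argument as an induction on the number of $\SU(2)$-factors (peeling off one quaternionic factor and invoking a strong inductive hypothesis on $\SU(2)^{k-2}$), whereas the paper does a one-shot check on a generic $V=V_{m_1}\otimes\ldots\otimes V_{m_k}\otimes V_\lambda$ after reordering the factors; you even note the equivalent non-inductive reformulation yourself, so this is the same idea. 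Second, where the paper disposes of the odd-$\ell$, $\lambda\ne0$ case by citing Lemma~\ref{lem:pairs}(i) for the single leftover pair $V_{m_k}\otimes V_\lambda$, you instead inline a direct computation on all of $V\otimes V_\lambda$ using $\sum_i(H_i+t_iZ)^2$, whose spectrum $\sum_i(m_i-2\ell_i+ct_i)^2$ is generically simple because any two entries differ by a nontrivial affine-linear function of $(t_1,\ldots,t_k)$; this is a slight generalization of Lemma~\ref{lem:pairs}(i) and works just as well. One small imprecision: your parenthetical ``if $\lambda=0$ \dots Lemma~\ref{lem:prod}(ii) applies at once'' is not right when $V$ is quaternionic and $\lambda=0$ (then $b_V$ vanishes identically, so Lemma~\ref{lem:prod}(ii) is inapplicable); but this is a $(c)$-case, not a $(b)$-case, and is already covered by the automatic inheritance of condition~(c) in Proposition~\ref{prop:prod}, so the overall argument is unaffected.
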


The following Lemma will be the key to the proof of Theorem~\ref{thm:spinmult}.
We continue to use the notation from Example~\ref{ex:torus} and from the
proof of Theorem~\ref{thm:su2} concerning the irreducible representations
of $T^n$ and $\SU(2)$, respectively. Recall that all nontirival irreducible representations~$V_\lambda$ of~$T^n$
are $1$-dimensional and of complex type, and that the $(m+1)$-dimensional representation~$V_m$
of $\SU(2)$ is of quaternionic type if $m$ is odd, and of real type otherwise.

\begin{lemma}\
\label{lem:pairs}
\begin{itemize}
\item[(i)]
If $m\in\N$ is odd and $0\ne\lambda\in(\Z^n)^*$
then $V_m\otimes V_\lambda\in\Irr(\SU(2)\times T^n,\C)_\C$
satisfies $b_{V_m\otimes V_{\lambda}}\ne0$.
\item[(ii)]
If $m,m'\in\N$ are odd, then $V_m\times V_{m'}\in\Irr(\SU(2)\times\SU(2),\C)_\R$ satisfies
$b_{V_m\otimes V_{m'}}\ne0$.
\end{itemize}
\end{lemma}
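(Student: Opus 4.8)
\textbf{Proof proposal for Lemma~\ref{lem:pairs}.}

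The plan is to exhibit, in each case, an explicit element $\tilde s\in\symt(\g\oplus\g')$ — \emph{not} of the separable form $\iota(s)+\iota'(s')$, for the reasons explained in Remark~\ref{rem:nontriv} — on which the relevant operator has simple spectrum. For part~(i), with $G=\SU(2)$ and $G'=T^n$, I would take a basis of $\su(2)$ as in the proof of Theorem~\ref{thm:su2} together with a suitable basis of $\R^n$ adapted to $\lambda$, and consider an element of the form $\iota(H^2+\eps A^2)+t\,(H\cdot Y)$ for a generator $Y$ of the torus direction with $\lambda(Y)\ne0$, plus $\iota'$ of a euclidean form on $\R^n$. Since $V_\lambda$ is one-dimensional, $D_{V_\lambda}$ is scalar and the cross term $D_{V_m\otimes V_\lambda}(H\cdot Y)$ acts on $V_m\otimes V_\lambda$ as $(\rho_{V_m})_*(H)\otimes(\chi_\lambda)_*(Y)\,\Id$, which by~\eqref{eq:eigH} is diagonal with \emph{distinct} entries $im\cdot c, i(m-2)c,\dots$ (here $c=2\pi i\lambda(Y)\ne0$, up to constants). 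Combined with the block-diagonal, tridiagonal-within-blocks structure of $D_{V_m}(H^2+\eps A^2)$ from Theorem~\ref{thm:su2}, a perturbation argument — first fix $\eps>0$ making each $\Phi_k(\eps)$ simple, then switch on small $t$ to separate the two blocks via the distinct diagonal shifts — should yield simplicity of the full operator, hence $b_{V_m\otimes V_\lambda}\ne0$.

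For part~(ii), the genuinely hard case, $G=G'=\SU(2)$ and both $V_m,V_{m'}$ are of quaternionic type while $V_m\otimes V_{m'}$ is of real type. Using the basis $\{H,A,B\}$ on each factor, I would look for $\tilde s$ built from $\iota(H^2)$, $\iota'(H^2)$, $\iota(A^2)$, $\iota'(A^2)$ and crucially a mixed term such as $H\cdot H'$ (writing $H,H'$ for the copies of $H$ in the two summands of $\su(2)\oplus\su(2)$). The diagonal operator $D_{V_m\otimes V_{m'}}(H\cdot H')$ acts on $v_{m,\ell}\otimes v_{m',\ell'}$ by the scalar $(m-2\ell)(m'-2\ell')$ (up to a fixed sign/constant, from~\eqref{eq:eigH}); the point is that the plain Casimir-type degeneracies of $D_{V_m}(H^2)\otimes\Id+\Id\otimes D_{V_{m'}}(H^2)$, which pair $(\ell,\ell')$ with $(m-\ell,m'-\ell')$, are broken by this product shift except for the single involution $(\ell,\ell')\mapsto(m-\ell,m'-\ell')$ — and \emph{that} remaining two-fold degeneracy, the one forced along the quaternionic structure $J\otimes J'$, is exactly what must be lifted. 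So I would then add small multiples of the $A^2$ terms on each factor, which couple $\ell\to\ell\pm2$ within each $V_m$, $V_{m'}$ and thus connect the two points of each surviving pair through a genuinely two-dimensional lattice of basis vectors; a connectedness/irreducibility argument for the resulting (multi-diagonal) matrix, analogous to the rank argument for tridiagonal matrices in Theorem~\ref{thm:su2}, should force every eigenspace down to dimension one.

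The main obstacle is precisely this last point in~(ii): ruling out the persistence of the $J\otimes J'$-forced double eigenvalue. A clean way to organize it is to argue by analyticity in the perturbation parameter $\eps$ (the coefficient of the $A^2$ terms): the characteristic polynomial of $D_{V_m\otimes V_{m'}}(\tilde s(\eps))$ depends polynomially on $\eps$, so $b_{V_m\otimes V_{m'}}\circ(\text{this path})$ is a polynomial in $\eps$, and it suffices to show it is not identically zero — equivalently, to compute the first-order splitting of each degenerate pair and check it is nonzero. For a pair $\{(\ell,\ell'),(m-\ell,m'-\ell')\}$ with both entries strictly between the extremes, the two basis vectors are joined by an $A\otimes\Id$- and $\Id\otimes A$-path, so standard degenerate perturbation theory gives a nonzero off-diagonal matrix element at some finite order; the edge cases $\ell\in\{0,m\}$ or $\ell'\in\{0,m'\}$ (where $A$ acts with a one-sided coefficient) must be checked separately but are low-dimensional and explicit. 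I would expect the write-up to reduce, after this reduction, to a finite bookkeeping of which order of $\eps$ first separates each pair, with no deep input beyond~\ref{notrems:abc}(iii) (geometric $=$ algebraic multiplicity) and the explicit formulas for $(\rho_{V_m})_*(H)$, $(\rho_{V_m})_*(A)$ already recorded in the proof of Theorem~\ref{thm:su2}.
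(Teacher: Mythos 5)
Your part~(i) would likely work, but it is over-engineered: the paper simply takes $s=(H,0)\cdot(0,Y)$ with $\lambda(Y)\ne0$, no $H^2$ or $A^2$ terms at all. Since $V_\lambda$ is one-dimensional, $D_{V_m\otimes V_\lambda}(s)=-(\rho_{V_m})_*(H)\otimes 2\pi i\lambda(Y)\Id$ is diagonal with the distinct entries $2\pi k\lambda(Y)$, $k\in\{m,m-2,\ldots,-m\}$, and one is done. You do not need to first make $D_{V_m}(H^2+\eps A^2)$ simple and then perturb.

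For part~(ii) you have identified the right difficulty (the $J\otimes J'$-forced degeneracy), but the route you sketch has two problems, one minor and one substantial. The minor one: after adding the $H\cdot H'$ cross term, the surviving degeneracies are \emph{not} only the pairs $(\ell,\ell')\leftrightarrow(m-\ell,m'-\ell')$. Writing $a=m-2\ell$, $b=m'-2\ell'$, your perturbed diagonal scalar is a symmetric function of $(a,b)$ of the form $a^2+b^2+c\,ab$, and this is also invariant under the swap $(a,b)\mapsto(b,a)$ whenever both components lie in the common range; so for $\min(m,m')\ge 3$ there are extra four-fold coincidences that your bookkeeping would have to handle. The substantial gap is the appeal to ``standard degenerate perturbation theory gives a nonzero off-diagonal matrix element at some finite order.'' Connectedness of the graph of nonzero matrix entries guarantees that \emph{some} coupling exists, not that the relevant effective matrix element at the first contributing order is nonzero; cancellations can and do occur in higher-order degenerate perturbation theory, and you have not ruled them out. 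This is precisely the step that would require real work, and you have replaced it with a hope.

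The paper avoids the issue entirely with a structurally different argument that you should compare against. Setting $\phi:=(\rho_{V\otimes V'})_*(H,\eps H)$, $\psi:=(\rho_{V\otimes V'})_*(B,\eps B)$ for small $\eps>0$ (so $-\phi^2$ and $-\psi^2$ each have all eigenvalues positive of multiplicity exactly two), it introduces the involution $T:=\rho_{V\otimes V'}(x,x)$ with $x=\exp(\tfrac\pi2 B)$. Then $T$ anticommutes with $\phi$, commutes with $\psi$, and preserves the real span $\mathcal R$ of the monomial basis. On the $\pm1$-eigenspaces $W^\pm$ of $T$: because $\phi$ is an invertible map interchanging $W^+$ and $W^-$ and preserving the two-dimensional eigenspaces of $\phi^2$, the operator $-\phi^2$ is \emph{simple} on each of $W^\pm$; because $\psi$ preserves $W^\pm$ and $\mathcal R$, its purely imaginary eigenvalues on $W^\pm\cap\mathcal R$ come in conjugate pairs, so $-\psi^2$ has every eigenvalue of multiplicity $\ge2$ on each $W^\pm$, and combined with the global multiplicity being exactly two this forces $-\psi^2|_{W^+}$ and $-\psi^2|_{W^-}$ to have \emph{disjoint} spectra. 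Taking $D_\alpha:=-((1-\alpha)\phi^2+\alpha\psi^2)$, each of the two properties (simple on each block; disjoint block spectra) holds on a dense open set of $\alpha$, so for generic $\alpha$ both hold and $D_\alpha$ is simple. This replaces your order-by-order splitting estimate with a parity/symmetry argument, and no matrix element ever needs to be computed. If you want to salvage your $H\cdot H'$ plus $A^2$ route, you would need to supply the missing nonvanishing computation for each surviving degenerate pair (including the swap-type pairs), and I would expect that to be considerably messier than the involution argument.
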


\begin{proof}
(i)
Let $H\in\su(2)=\g'$ be as in the proof of Theorem~\ref{thm:su2}, and choose
$Y\in\R^n=\g$ such that $\lambda(Y)\ne0$.
Let
$$s:=(H,0)\cdot(0,Y)\in\symt(\g\oplus\g').
$$
Write $V:=V_m$, $V':=V_\lambda$\,.
Note that $(\rho_{V\lambda\otimes V'})_*(H,0)=(\rho_V)_*(H)\otimes\Id$ and
$(\rho_{V\otimes V'})_*(0,Y)=\Id\otimes(\rho_{V'})_*(Y)$
commute. Thus,
\begin{equation*}
\begin{split}
D_{V\otimes V'}(s)&=-((\rho_V)_*(H)\otimes\Id)\circ(\Id\otimes(\rho_{V'})_*(Y))
=-(\rho_V)_*(H)\otimes(\rho_{V'})_*(Y)\\&=-(\rho_{V_m})_*(H)\otimes 2\pi i\lambda(Y)\Id.
\end{split}
\end{equation*}
Recall that the eigenvalues $im,i(m-2),\ldots,-i(m-2),-im$
of $(\rho_{V_m})_*(H)$ are all simple. 
Since $V=V_\lambda$ has dimension~$1$,
the eigenvalues $2\pi m\lambda(Y),2\pi(m-2)\lambda(Y),\ldots, -2\pi m\lambda(Y)$
of $D_{V\otimes V'}(s)$ are all simple, too. In particular, $b_{V\otimes V'}(s)\ne0$. This
shows $b_{V\otimes V'}\ne0$, as desired.

(ii)
Write $V:=V_m$ and $V':=V_{m'}$\,.
We continue to use the basis $\{H,A,B\}$ of $\g=\g'=\su(2)$ from
the proof of Theorem~\ref{thm:su2}.
Since $H$ and $B$ are conjugate matrices in $\su(2)$, they
are conjugate by an element of $\SU(2)$. So $(\rho_V)_*(B)$
has the same eigenvalues $im,\ldots,-im$ as $(\rho_V)_*(H)$,
and similarly for $V'$\,.
Choose some $\eps\in(0,\frac1{m'})$ and consider the auxiliary elements
$$s_H:=(H,\eps H)^2\text{ and }s_B:=(B,\eps B)^2\in\symt(\g\oplus\g')=\symt(\g\oplus\g).
$$
Each of the operators
\begin{equation*}
\begin{split}
\phi&:=(\rho_{V\otimes V'})_*(H,\eps H)=(\rho_V)_*(H)\otimes\Id+\Id\otimes(\rho_{V'})_*(\eps H)
   \text{ and }\\
\psi&:=(\rho_{V\otimes V'})_*(B,\eps B)=(\rho_V)_*(B)\otimes\Id+\Id\otimes(\rho_{V'})_*(\eps B)
\end{split}
\end{equation*}
has the eigenvalues $\pm(ik\pm\eps ik')$, $k\in\{1,3,\ldots,m\}$, $k'\in\{1,3,\ldots,m'\}$.
Due to the choice of~$\eps$, all of these eigenvalues are simple (and nonzero).
So each of the operators
$$D_{V\otimes V'}(s_H)=-\phi^2\text{ and }D_{V\otimes V'}(s_B)=-\psi^2
$$
has the eigenvalues $(k\pm\eps k')^2$, all positive and of multiplicity exactly two.
Although multiplicity two is already better than multiplicity four (recall the considerations
in Remark~\ref{rem:nontriv}),
showing that the multiplicities become simple for generic~$s$ requires a little more work.
For $\alpha\in\R$, let
$$D_\alpha:=D_{V\otimes V'}((1-\alpha) s_H+\alpha s_B)=-((1-\alpha)\phi^2+\alpha\psi^2).
$$
We are going to show, specifically, that for all~$\alpha$ in some dense open set
$\mathcal{O}\subset\R$, $D_\alpha$ has only
simple eigenvalues.

Let $x:=\exp(\frac\pi 2 B)=\left(\begin{smallmatrix}0&-1\\1&0\end{smallmatrix}\right)$.
Note that this is the same matrix as~$B$, but this time regarded as an element of $\SU(2)$,
not its Lie algebra.
Let
$$T:=\rho_{V\otimes V'}(x,x)=\rho_V(x)\otimes\rho_{V'}(x).
$$
Recalling the definition of the basis elements $v_{m,\ell}$ of $V=V_m$ and the
definition of the action $\rho_V$\,, note the following facts:
\begin{itemize}
\item[1.)]
$T$ is an involution; i.e., $T^2=\Id$. In fact, note that $x^2=\left(\begin{smallmatrix}-1&0\\0&-1
\end{smallmatrix}\right)\in\SU(2)$ acts on both $V$ and~$V'$ as $-\Id$. Thus,
$T^2=\rho_V(x^2)\otimes\rho_{V'}(x^2)=-\Id\otimes(-\Id)=\Id$.
\item[2.)]
$T$ preserves the real span
$$\mathcal{R}:=\spann_\R\{v_{m,\ell}\otimes v_{m',\ell'}\mid 0\le\ell\le m,0\le\ell'\le m'\}
$$
of the basis elements $v_{m,\ell}\otimes v_{m',\ell'}$ of~$V\otimes V'$. This follows from the fact
that the matrix $x$ has real entries, and from the definition of $\rho_V$\,.
\item[3.)]
$T$ anticommutes with~$\phi$. In fact, $\operatorname{Ad}_x(H)=xHx\inv=-H$, hence
$\rho_V(x)\circ(\rho_V)_*(H)\circ\rho_V(x)\inv=-(\rho_V)_*(H)$, and similarly for~$V'$.
\item[4.)]
$T$ commutes with~$\psi$. This is obvious from the definitions, since $x=\exp(\frac\pi2 B)$.
\end{itemize}
Let $W^+,W^-\subset V\otimes V'$ denote the $1$-, resp.~$(-1)$-eigenspace of the involution~$T$.
Both are invariant under $\phi^2$ and $\psi^2$ by 3.)~and~4.), hence under each of the maps~$D_\alpha$\,.

Since $\phi$ anticommutes with~$T$, it interchanges
$W^+$ and~$W^-$. Moreover, $\phi$ is invertible and
preserves eigenspaces of~$\phi^2$. Since all eigenvalues
of~$\phi^2$ were of multiplicity two, this implies that
$D_0\restr{W^+}=-\phi^2\restr{W^+}$ has only simple eigenvalues, and so does $D_0\restr{W^-}$.
It follows that there is a dense open set $\mathcal{O}_0\subset\R$ such that $D_\alpha\restr{W^+}$
and $D_\alpha\restr{W^-}$ both have only simple eigenvalues.

On the other hand, $\psi$ commutes with~$T$, so $\psi$ preserves both $W^+$ and $W^-$.
By~2.)~above, $W^+$ and $W^-$ are spanned by their intersections with the real vector space~$\mathcal{R}$.
Also note that $\psi$ leaves~$\mathcal{R}$ invariant, being the initial derivative
of the family of operators $\rho_{V\otimes V'}(t(B,\eps B))$ which clearly preserve~$\mathcal{R}$.
Since the eigenvalues of~$\psi$ are purely imaginary and nonzero,
it follows that the eigenvalues of~$\psi$ on $W^+\cap\mathcal{R}$
come in conjugate pairs; therefore, all eigenvalues of $\psi^2\restr{W^+}$ have multiplicity at least two.
Analogously, the same holds for $\psi^2\restr{W^-}$\,. However, we already saw above
that all eigenvalues of~$\psi^2$ are of multiplicity exactly two. Therefore,
$D_1\restr{W^+}=-\psi^2\restr{W^+}$ and $D_1\restr{W^-}=-\psi^2\restr{W^-}$
can have no eigenvalues in common.
It follows that there is a dense open set $\mathcal{O}_1\subset\R$ such that $D_\alpha\restr{W^+}$
and $D_\alpha\restr{W^-}$ have no eigenvalues in common.

Consequently, for all $\alpha\in\mathcal{O}:=\mathcal{O}_0\cap\mathcal{O}_1$ the operator $D_\alpha$
has both of the above properties, and can therefore have only simple eigenvalues. So
$b_{V\otimes V'}(\alpha s_H+(1-\alpha)s_B)\ne0$ for these~$\alpha$, which shows $b_{V\otimes V'}\ne0$,
as desired.
\end{proof}

\begin{proof}[Proof of Theorem~\ref{thm:spinmult}.]\
We can treat both types of products simultaneously by admitting $n=0$, in which case the torus
is the trivial group~$\{e\}$ (possessing only the trivial irreducible representation~$V_0$).

Since $\SU(2)$ has no irreducible representations of complex type,
$\SU(2)\times\ldots\times\SU(2)$ has no such representations either, by Remark~\ref{rem:prod}. Using
Theorem~\ref{thm:su2} and Example~\ref{ex:torus}, and applying
Proposition~\ref{prop:prod}(i) repeatedly, we conclude that $G:=\SU(2)\times\ldots\times\SU(2)\times T^n$
satisfies condition~(a) of Proposition~\ref{prop:abc}.
It remains to show conditions (b) and~(c).
Let $k$ be the number of factors equal to~$\SU(2)$ in~$G$.
Using the same notation as before
for the irreducible representations of $\SU(2)$ and~$T^n$, let
$$V:=V_{m_1}\otimes\ldots\otimes V_{m_k}\otimes V_\lambda
$$
be an arbitrary irreducible representation of~$G$.
Let $\ell$ denote the number of factors $V_{m_j}$ of quaternionic type.
Whether or not $V$ satisfies $b_V\ne0$, resp.~$c_V\ne0$,
does obviously not depend on the ordering of the first $k$ factors. We order
them such that the product starts with $\ell':=2\lfloor\ell/2\rfloor$ factors of quaternionic type,
continues with factors of real type, and
$V_{m_k}$ is of either real or quaternionic type, depending on whether~$\ell$ is even or odd.
By Lemma~\ref{lem:pairs}(ii),
$b_{V_{m_1}\otimes V_{m_2}}\ne0$, \dots, $b_{V_{m_{\ell'-1}}\otimes V_{m_{\ell'}}}\ne0$.
We also have $b_{V_{m_j}}\ne0$ for the $V_{m_j}$ of real type (see Theorem~\ref{thm:su2}), and $b_{V_\lambda}\ne0$
(see Example~\ref{ex:torus}).
In the case that $\ell$ is even, one immediately concludes $b_V\ne0$ by using Lemma~\ref{lem:prod}(ii)
repeatedly.

Now let $\ell$ be odd. We still have $b_{V_{m_1}\otimes\ldots\otimes V_{m_{k-1}}}\ne0$, and $V_{m_k}$ is
of quaternionic type. If $\lambda\ne0$ then $b_{V_{m_k}\otimes V_\lambda}\ne0$ by Lemma~\ref{lem:pairs}(i).
By Lemma~\ref{lem:prod}(ii) we again obtain $b_V\ne0$.
If $\lambda=0$, then $V_\lambda$ is the trivial represenation (of real type),
hence $V$ is of quaternionic type.
Using $c_{V_{m_k}}\ne0$ (see Theorem~\ref{thm:su2})
and applying Lemma~\ref{lem:prod}(iii) twice, we obtain $c_V\ne0$.
\end{proof}

\begin{lemma}
\label{lem:quot}
Let $G$ be a compact Lie group satisfying the conditions of Proposition~\ref{prop:abc},
and let $\Gamma$ be some discrete central subgroup of~$G$. Then $\bar G:=G/\Gamma$
again satisfies the conditions of Proposition~\ref{prop:abc}.
\end{lemma}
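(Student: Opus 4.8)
The plan is to observe that passing from $G$ to a quotient $\bar G=G/\Gamma$ by a \emph{discrete} central subgroup alters neither the Lie algebra nor the polynomials $a_{V,W}$, $b_V$, $c_V$; it merely restricts the index set over which conditions (a), (b), (c) of Proposition~\ref{prop:abc} must be checked. First I would record the standard identifications. Since $\Gamma$ is discrete, the projection $\pi:G\to\bar G$ is a covering homomorphism, so its differential identifies $\bar\g$ with $\g$, and hence $\symt(\bar\g)$ with $\symt(\g)$. Pulling back representations along $\pi$ identifies $\Irr(\bar G,\C)$ with the subset
$$
\{\,V\in\Irr(G,\C)\mid \Gamma\subseteq\kernn\,\rho_V\,\}\subseteq\Irr(G,\C):
$$
a continuous representation of $G$ factors through $\bar G$ precisely when $\Gamma$ acts trivially, irreducibility is unaffected because $\pi$ is surjective (the images of $G$ and of $\bar G$ in $\GL(V)$ coincide), and for the same reason a linear map between two such modules is a $\bar G$-intertwiner if and only if it is a $G$-intertwiner. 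Since the dual commutes with pullback, for $\bar G$-modules the relations of isomorphism and of being dual to one another are detected exactly as for the corresponding $G$-modules.

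Under these identifications, for every $V$ in the above subset the operators $(\rho_V)_*(Y)$, $Y\in\g=\bar\g$, are literally the same whether $V$ is regarded as a $G$- or a $\bar G$-module; hence so are $D_V$, the characteristic polynomial $p_V$, and therefore the polynomials $a_{V,W}$, $b_V$, $c_V$ on $\symt(\bar\g)=\symt(\g)$. Moreover a conjugate-linear map $J:V\to V$ is a $G$-map if and only if it is a $\bar G$-map, so $V$ is of real, complex, resp.\ quaternionic type as a $\bar G$-module exactly when it is so as a $G$-module. Combining these observations, each instance of condition~(a) for $\bar G$ (a pair $V,W\in\Irr(\bar G,\C)$ with $V,V^*\not\cong W$), and each instance of~(b) or~(c) for $\bar G$ (a $V\in\Irr(\bar G,\C)$ of the relevant type), becomes, after the embedding into $\Irr(G,\C)$, an instance of the corresponding condition for $G$. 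Since $G$ satisfies (a), (b), (c) by hypothesis, so does $\bar G$, which is the assertion.

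I do not expect any genuine obstacle here; the only point demanding a little care is the choice of representatives. One should note at the outset that whether the conditions of Proposition~\ref{prop:abc} hold depends only on the isomorphism classes involved and not on the chosen transversals $\Irr(G,\C)$, $\Irr(\bar G,\C)$, so that it is legitimate to take $\Irr(\bar G,\C)$ to be an honest subset of $\Irr(G,\C)$ as above. With that convention the argument is purely formal. (As an aside, one then also recovers the metric statement of Proposition~\ref{prop:abc} for $\bar G$ directly, the relevant residual set in $\bar\g^{\oplus\dim\bar\g}=\g^{\oplus\dim\g}$ being a superset of the one for $G$.)
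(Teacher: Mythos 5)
Your argument is essentially identical to the paper's: identify $\Irr(\bar G,\C)$ with the subset of $\Irr(G,\C)$ of representations trivial on $\Gamma$, note that the Lie algebras (hence $\symt$ and the polynomials $a_{V,W},b_V,c_V$) and the real/complex/quaternionic types are unchanged, and conclude that the conditions of Proposition~\ref{prop:abc} for $\bar G$ form a subset of those for $G$. You merely spell out the standard identifications in more detail than the paper does; there is no gap and no divergence in approach.
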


\begin{proof}
Note that $\Irr(\bar G,\C)$ can be considered as a subset of $\Irr(G,\C)$,
consisting of precisely those irreducible representations of~$G$ which restrict to the
trivial representation on~$\Gamma$. It is easy to see that this inclusion respects the different
types of irreducible representations (real, complex, quaternionic) from~\ref{not:regular}.
Moreover, the Lie algebras of $G$ and~$\bar G$ coincide.
Therefore, the conditions of Proposition~\ref{prop:abc} for~$\bar G$ amount to just a certain subset
of the conditions for~$G$, which are satisfied by assumption.
\end{proof}

\begin{corollary}
\label{cor:quot}
Any quotient of a Lie group of the form $\SU(2)\times\ldots\times\SU(2)\times T^n$ by a discrete
central subgroup satisfies the conditions of
Proposition~\ref{prop:abc}.
In particular, for a generic left invariant metric~$g$ on any such quotient, the associated Laplace
operator~$\Delta_g$ has irreducible real eigenspaces. For example, this holds for the
compact Lie groups $\SO(3)$, $\U(2)$, and $\SO(4)$.
\end{corollary}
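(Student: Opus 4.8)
The plan is to deduce Corollary~\ref{cor:quot} by combining Theorem~\ref{thm:spinmult} with Lemma~\ref{lem:quot}, and then to verify that the three named groups $\SO(3)$, $\U(2)$, $\SO(4)$ genuinely arise as such quotients. The first part is immediate: Theorem~\ref{thm:spinmult} tells us that every $G$ of the form $\SU(2)\times\ldots\times\SU(2)\times T^n$ satisfies conditions~(a), (b), (c) of Proposition~\ref{prop:abc}, and Lemma~\ref{lem:quot} says exactly that this property is inherited by $G/\Gamma$ for any discrete central subgroup~$\Gamma$. Hence any such quotient satisfies the conditions of Proposition~\ref{prop:abc}, and the last sentence of that proposition then guarantees that the orthonormal bases for left invariant metrics~$g$ with $\Delta_g$ having irreducible real eigenspaces form a residual (in particular dense, nonempty) set in $\g^{\oplus\dim\g}$; this is what ``for a generic left invariant metric'' means.

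The remaining work is purely the identification of the three examples as quotients of groups covered by Theorem~\ref{thm:spinmult}. For $\SO(3)$: the adjoint representation gives the standard two-to-one covering $\SU(2)=\Spin(3)\to\SO(3)$, so $\SO(3)\cong\SU(2)/\{\pm\Id\}$, and $\{\pm\Id\}$ is the center of $\SU(2)$, hence certainly a discrete central subgroup. For $\U(2)$: there is a surjective homomorphism $\SU(2)\times T^1\to\U(2)$, $(x,e^{i\theta})\mapsto e^{i\theta}x$, whose kernel is $\{(\Id,1),(-\Id,-1)\}$, a central subgroup of order two; thus $\U(2)\cong(\SU(2)\times T^1)/\Gamma$ with $T^1$ a one-dimensional torus. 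For $\SO(4)$: one has the double cover $\Spin(4)=\SU(2)\times\SU(2)\to\SO(4)$, with kernel the diagonal $\{(\Id,\Id),(-\Id,-\Id)\}$, again central and discrete, so $\SO(4)\cong(\SU(2)\times\SU(2))/\Gamma$. In each case $\Gamma$ is finite, hence discrete, and contained in the center, so Lemma~\ref{lem:quot} applies.

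Since none of this requires any genuinely new argument — the substantive content is entirely in Theorem~\ref{thm:spinmult} and Lemma~\ref{lem:quot}, already established — I do not anticipate any real obstacle. The only point deserving a sentence of care is that the covering maps above are honest Lie group homomorphisms with finite central kernels, so that the hypotheses of Lemma~\ref{lem:quot} are literally met; this is standard and can be cited or left to the reader. Thus the proof of the corollary is essentially one line (apply Lemma~\ref{lem:quot} to the conclusion of Theorem~\ref{thm:spinmult}) plus the brief list of the three covering descriptions just given.
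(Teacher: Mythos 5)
Your proposal is correct and matches the paper's own proof exactly: both invoke Theorem~\ref{thm:spinmult} together with Lemma~\ref{lem:quot} and then record the identifications $\SO(3)\cong\SU(2)/\{\pm\Id\}$, $\SO(4)\cong(\SU(2)\times\SU(2))/\{\pm(\Id,\Id)\}$, and $\U(2)\cong(\SU(2)\times T^1)/\{\pm(\Id,1)\}$. No discrepancies.
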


\begin{proof}
This follows immediately from Theorem~\ref{thm:spinmult} and Lemma~\ref{lem:quot}; also note that
$\SO(3)\cong\SU(2)/\{\pm\Id\}$, $\SO(4)\cong(\SU(2)\times\SU(2))/\{\pm(\Id,\Id)\}$, and
$\U(2)\cong (\SU(2)\times S^1)/\{\pm(\Id,1)\}$, where
$S^1=T^1$ is considered as $S^1\subset\C$.
\end{proof}

\end{document}